\documentclass[a4paper,12pt]{article}
\usepackage[margin=1in]{geometry}
\newcommand{\sign}{\operatorname{sgn}}
\usepackage{times, url, booktabs}
\usepackage{amsmath,amssymb,amsthm,amsfonts,graphicx,float,mathtools}
\usepackage{colortbl,xcolor,tikz,array,comment,breqn,bm}
\usetikzlibrary{arrows.meta}

\newtheorem{theorem}{Theorem}[section]

\newtheorem{lemma}[theorem]{Lemma}

\theoremstyle{definition}
\newtheorem{definition}[theorem]{Definition}
\newtheorem{eg}[theorem]{Example}
\newtheorem{rem}[theorem]{Remark}

\numberwithin{equation}{section}
\usetikzlibrary{trees}
\usepackage{listofitems}
\DeclarePairedDelimiter{\ceil}{\lceil}{\rceil}

\usepackage{hyperref}
\begin{document}
\setcounter{page}{1}

\begin{center}
{\LARGE \bf Circular Superpatterns and Zigzag constructions}
\vspace{8mm}

{\large\bf M. Hariprasad\,$^{1}$, Raisa D'Souza\,$^{2}$}

\vspace{3mm}

$^{1}$School of Mathematics and Natural Sciences,\\
Chanakya University, Bengaluru 562110, India\\
E-mail: \url{mhariprasadkansur@gmail.com}

\vspace{2mm}

$^{2}$School of Arts and Sciences,\\
Azim Premji University, Bengaluru, India \\
E-mail: \url{raisadsouza1989@gmail.com}

\usetikzlibrary{positioning}

\end{center}
\vspace{10mm}

\noindent
{\bf Abstract:} In this article, we introduce the notion of circular $k$-superpatterns, permutations that contain all $k$ length patterns up to rotation equivalence. We present a construction of circular superpattern using the linear $(k-1)$ superpattern, explicitly giving an upper bound on its length. Motivated by the zigzag framework of Engen and Vatter, we adapt and simplify their score function to the circular setting and analyze its parity properties. For odd $k$ we construct a circular $k$-superpattern using zigzag words and prove its correctness.\\
{\bf Keywords:} Permutations, Patterns, Super patterns\\
{\bf AMS Classification:} 05A05, 05A15, 68R05
\vspace{10mm}
\section{Introduction}

The question ``What is the smallest length of a permutation that contains all permutations of length $k$?" has been asked in different contexts: by Knuth et al in 1972 \cite{chvatal1972selected}, by Chung et al in 1992 \cite{chung1992universal}, by Ashlock and Tillotson in 1993 \cite{ashlock1993construction} and by Arratia in 1999 \cite{arratia1999stanley}. Arratia established an equivalence between two versions of the Stanley and Wilf conjecture on pattern avoidance. As a closely related but complementary problem, he also studied pattern containment and constructed a permutation of length $k^2$ that contains all $k$-patterns, establishing an upper bound for the length, $L_k$, of such permutations. He also conjectured that asymptotically, $L_k\sim \left(\dfrac{k}{e}\right)^2$. Patterns that contain every length $k$ permutation are called $k$-super patterns. The problem of constructing smaller superpatterns has attracted attention as a fundamental question in pattern containment.  

In 2007, Eriksson et al \cite{eriksson2007dense}, showed that $\left(\dfrac{k}{e}\right)^2\le L_k\leq \left[\dfrac{2}{3}+O(1)\right]k^2$ and further conjectured that $L_k\sim \dfrac{k^2}{2}$. In 2009, Miller \cite{miller2009asymptotic} gave a simple construction that showed that $L_k\leq \dfrac{k(k+1)}{2}$, lending some evidence to the conjecture by Errikson et al.

In 2021, Engen and Vatter~\cite{engen2020containing} proposed a construction of length $\ceil*{\dfrac{k^2+1}{2}}$ using an  \emph{infinite zigzag} word. They proved that every $k$-pattern is either a \emph{layered permutation} or a \emph{distant inverse permutation}, and gave embeddings of both types into the zigzag pattern.  Their work revealed striking structural regularities in the space of permutations. Superpatterns containing layered permutations  were also studied by Gray \cite{gray2015bounds}. An improved lower bound for $L_k$ was given by Chroman et al \cite{chroman2021lower} and they extended their approach to other universal type problems.

In this article, we introduce the notion of a \emph{circular $k$-superpattern}, which is effectively, a permutation written on a circle, where patterns are considered equivalent up to rotation.  We present a simple construction that converts a linear $(k-1)$-superpattern into a circular $k$-superpattern, yielding an explicit upper bound on its length.  We also simplify the score function of Engen and Vatter and extend it to the circular setting.  Finally, we present a \emph{zigzag parity construction} that generates a circular superpatterns for odd $k$.

The organization of the paper is as follows. Section~2 reviews background and notation. Section~3 presents the construction of a circular $k$-superpattern from a linear superpattern. Section~4 introduces the zigzag construction and parity-based score function. Section~5 gives the construction and proves its correctness of an odd $k$ circular superpattern.

\section{Background and Notations}

Let $[n] = \{1,2,\dots,n\}$, and let $\mathbf{S}_n$ denote the set of all permutations on $[n]$. In this article, we use  one-line notation for permutations; that is, a permutation $\pi$ of $\{1,2,\dots,n\}$ is written as 
$\pi = (\pi_1, \pi_2, \dots, \pi_n)$.  
Here $\pi_j$ denotes the $j$-th element of $\pi$. To improve readability and avoid nested subscripts, when the index itself has a subscript (for example, $j_k$), 
we use parentheses and write $\pi(j_k)$ instead of $\pi_{j_k}$. 
This notation is used consistently throughout the paper. \par
For a subset $J = \{i_1 < i_2 < \cdots < i_k\} \subseteq [n]$, and a permutation $\pi \in \mathbf{S}_n$ define the restriction
\[
\pi|_J = (\pi(i_1), \pi(i_2), \dots, \pi(i_k)).
\]
A permutation $\pi \in \mathbf{S}_n$ \emph{contains} a pattern $\sigma \in \mathbf{S}_k$ as an order-isomorphic subsequence if for some $J$ we have $\pi(i_p) < \pi(i_q)$ exactly when $\sigma(p) < \sigma(q)$.

\begin{eg}
Let $\pi = (3\,2\,1\,4\,5)$. Then $\pi$ contains $\sigma = (2\,1\,3\,4)$ as an order-isomorphic subsequence for the index set $J = \{2,3,4,5\}$, since $\pi|_J =( 2\,1\,4\,5)$.
\end{eg}

A permutation $\pi \in \mathbf{S}_n$ is a \emph{$k$-superpattern} if it contains every permutation in $\mathbf{S}_k$ as an order-isomorphic subsequence.  The minimal possible such length is denoted by $L(k)$.  
Arratia showed that $L(k) \le k^2$, and Engen and Vatter improved this to 
\[
L(k) \le \ceil*{\frac{k^2+1}{2}}.
\]

\paragraph{Circular superpatterns.}
We extend this notion to the circular setting by treating permutations as numbers arranged around a cycle.  
Two linear permutations are \emph{cyclically equivalent} if one can be obtained from the other by a rotation.  
For example, $(1\,2\,3\,4)$ and $(4\,1\,2\,3)$ are cyclically equivalent. 

\begin{definition}[Rotation of a permutation]
Let $\sigma = (\sigma_1, \sigma_2, \ldots, \sigma_k) \in S_k$. 
For an integer $r$ with $0 \le r < k$, the \emph{rotation of $\sigma$ by $r$ positions to the left}, 
denoted by $\rho_r(\sigma)$, is the permutation obtained by cyclically shifting the entries of $\sigma$ to the left:
\[
\rho_r(\sigma) = (\sigma_{r+1}, \sigma_{r+2}, \ldots, \sigma_k, \sigma_1, \ldots, \sigma_r).
\]
Two permutations $\sigma, \tau \in S_k$ are said to be \emph{cyclically equivalent} if 
$\tau = \rho_r(\sigma)$ for some $r$.
\end{definition}

\begin{definition}[Circular containment of a pattern]
A permutation $\pi \in S_n$ \emph{circularly contains a pattern} $\sigma \in S_k$ 
if $\pi$ has a subsequence that is order isomorphic to a permutation that is cyclically equivalent to $\sigma$. That is, there exists a rotation $\rho_r(\sigma)$ of $\sigma$ and indices 
$ i_1 < i_2 < \cdots < i_k $, 
such that the subsequence 
\[
(\pi(i_1), \pi(i_2), \ldots, \pi(i_k))
\]
is order-isomorphic to $\rho_r(\sigma)$. We may also say that \emph{$\pi$ contains $\sigma$ as a circular subsequence.}
\end{definition}

A permutation $\pi \in \mathbf{S}_n$ is a \emph{circular $k$-superpattern} if it contains all permutations in $\mathbf{S}_k$ up to cyclic equivalence.  
Let $L_{\mathrm{circ}}(k)$ denote the minimal possible length of such a permutation.

\paragraph{Zigzag patterns.}
The construction of Engen and Vatter does not begin with a permutation. Instead, it begins with a structured word called a zigzag word. Every symbol appears several times, allowing permutations to be embedded as exact subsequences. A permutation is then obtained from this word by a tie-breaking procedure. Since our circular construction follows the same philosophy, we first introduce zigzag words before explaining how they are converted into permutations.

For completeness we recall the \emph{zigzag pattern} of Engen and Vatter \cite{engen2020containing}.  Fix an integer $q \ge 2$.  
For each integer $j \ge 1$, define the $j$-th \emph{run} $R_j$ as the ordered list of integers from $\{1,2,\dots,q\}$ whose parity matches that of $j$:  
\[
R_j =
\begin{cases}
1,3,5,\dots,q-1,q, & \text{if $j$ is odd},\\[3pt]
q-1,q-3,\dots,4,2, & \text{if $j$ is even}.
\end{cases}
\]
Then the concatenation of the first $m$ runs gives the zigzag pattern
\[
\mathrm{zz}(m,q) = R_1 R_2 \cdots R_m.
\]

We refer to $m$ as the \emph{number of runs} and $q$ as the \emph{alphabet width} of the zigzag word.

\begin{eg}
For $(m,q)=(3,4)$ we have
\[
R_1 = 1,3, \quad
R_2 = 4,2, \quad
R_3 = 1,3,
\]
so that $\mathrm{zz}(3,4) = 1\,3\,4\,2\,1\,3.$
\end{eg} 
Figure \ref{zz(4,4)} and Figure \ref{zz(5,5)} illustrate $zz(4,4)$ and $zz(5,5)$.
Engen and Vatter considered the limiting case $q=\infty$.
\begin{figure}[H]
    \centering
\begin{tikzpicture}
    \def\seq{1,3,4,2,1,3,4,2}

    \foreach [count=\i] \y in \seq {
        \node at (\i,\y) {\y};
    }
\end{tikzpicture}
    \caption{$zz(4,4)$}
    \label{zz(4,4)}
\end{figure}

\begin{figure}[H]
    \centering
\begin{tikzpicture}
    \def\seq{1,3,5,4,2,1,3,5,4,2,1,3,5}

    \foreach [count=\i] \y in \seq {
        \node at (\i,\y) {\y};
    }
\end{tikzpicture}
    \caption{$zz(5,5)$}
    \label{zz(5,5)}
\end{figure}

For a pattern $\sigma = \sigma_1 \sigma_2 \cdots \sigma_k$ we denote the sequence $\sigma^+  = \sigma_1+1 ,\sigma_2+1, \cdots ,\sigma_k+1$.

The upper bounds we prove are summarized in Table \ref{results}.

\begin{table}[h]
\centering
\caption{Upper bounds for $k$-circular superpattern length}\label{results}
\renewcommand{\arraystretch}{1.2}
\begin{tabular}{@{}lcc@{}}
& \textbf{Over alphabet $[k+1]$} 
& \textbf{Over permutations} \\ 
\midrule
\textbf{Length} 
& $\displaystyle \frac{(k-1)(k+1)}{2}$ 
& $\displaystyle \Big\lceil \frac{(k-1)^{2}+1}{2} \Big\rceil + 1$ \\
\bottomrule
\end{tabular}
\end{table}

\section{Circular $k$-superpatterns using linear $k-1$ superpattern}

In this section we construct a circular $k$-superpattern from a linear $(k-1)$-superpattern.  
This yields the general upper bound
\[
L_{\mathrm{circ}}(k) \le L(k-1) + 1.
\]
A numerical example at the end shows that this bound is not always tight.

\begin{theorem}\label{thm:circ_length}
Let $k \ge 2$.  
If $\pi = (\pi_1, \pi_2, \dots, \pi_L)$ is a linear $(k-1)$-superpattern, then the permutation  
\[
\gamma = (\,L+1,\; \pi_1, \pi_2, \dots, \pi_L\,)
\]
is a circular $k$-superpattern, consequently,
\[
L_{\mathrm{circ}}(k) \le L(k-1) + 1.
\]
\end{theorem}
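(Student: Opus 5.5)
The idea is to rotate an arbitrary $\sigma \in \mathbf{S}_k$ so that its maximum entry $k$ comes first. We then embed the rotated pattern by sending that maximum to the new front entry $L+1$ of $\gamma$, and the remaining $k-1$ entries into the linear superpattern $\pi$.

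Fix $\sigma \in \mathbf{S}_k$ and let $p$ be the position with $\sigma_p = k$. Take the rotation $\tau = \rho_{p-1}(\sigma)$, so that
\[
\tau = (\sigma_p, \sigma_{p+1}, \dots, \sigma_k, \sigma_1, \dots, \sigma_{p-1}),
\]
and $\tau_1 = k$. By the definition of circular containment, it suffices to show that $\gamma$ contains $\tau$ as an ordinary order-isomorphic subsequence.

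Write $\tau = (k, \tau')$, where $\tau' = (\tau_2, \dots, \tau_k)$ is a sequence of the distinct values $1, \dots, k-1$. Thus $\tau' \in \mathbf{S}_{k-1}$. Since $\pi$ is a linear $(k-1)$-superpattern, there are indices $i_1 < \cdots < i_{k-1}$ in $[L]$ with $(\pi(i_1), \dots, \pi(i_{k-1}))$ order-isomorphic to $\tau'$. In $\gamma$ the entry $\pi_i$ sits at position $i+1$. So we choose the positions $1 < i_1+1 < \cdots < i_{k-1}+1$ of $\gamma$. This gives the subsequence $(L+1, \pi(i_1), \dots, \pi(i_{k-1}))$.

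We check that this subsequence is order-isomorphic to $\tau$. Among the last $k-1$ entries, the relative order agrees with $\tau'$ by the choice of the $i_j$. The first entry, $L+1$, exceeds every $\pi_i \le L$, just as $\tau_1 = k$ exceeds every other entry of $\tau$. Hence every pairwise comparison matches, and $\gamma$ circularly contains $\sigma$. Also, $\gamma$ is a genuine permutation of $[L+1]$, since $\pi$ is a permutation of $[L]$.

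Since $\sigma$ was arbitrary, $\gamma$ is a circular $k$-superpattern of length $L+1$. Taking $\pi$ to be a linear $(k-1)$-superpattern of minimal length $L = L(k-1)$ gives $L_{\mathrm{circ}}(k) \le L(k-1)+1$. For $k=2$ this reads $L_{\mathrm{circ}}(2) \le L(1)+1 = 2$, which is consistent.

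The only substantive step is the choice of rotation placing the maximum first; everything after that is direct verification. The one subtlety is keeping the index shift and the "left rotation" convention of $\rho_r$ straight, namely using $r = p-1$.
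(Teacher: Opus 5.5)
Your proposal is correct and follows the paper's proof essentially verbatim: rotate $\sigma$ so that $k$ comes first, embed the remaining $k-1$ entries into the $(k-1)$-superpattern $\pi$, and send $k$ to the prepended entry $L+1$, which exceeds every $\pi_i$. Your explicit handling of the rotation index $r=p-1$ and the position shift $i\mapsto i+1$ in $\gamma$ is just slightly more careful bookkeeping than the paper writes out.
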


\begin{proof}
We need to show that every cyclic equivalence class of permutations of length $k$ contains a representative that appears as an order-isomorphic subsequence of $\gamma$.
Write
\[
\gamma = (\gamma_0, \gamma_1, \dots, \gamma_L),
\qquad 
\gamma_0 = L+1,\; \gamma_i = \pi_i \ (1 \le i \le L).
\]

Let $\sigma \in \mathbf{S}_k$ be arbitrary.  
Every cyclic equivalence class contains a unique rotation whose first symbol is $k$, so there exists $r$ such that
\[
\rho_r(\sigma) = (\,k,\, \tau_1, \tau_2, \dots, \tau_{k-1}\,),
\]
where $\tau = (\tau_1,\dots,\tau_{k-1}) \in \mathbf{S}_{k-1}$.

Since $\pi$ is a $(k-1)$-superpattern, there exist indices $J'=\{1 \le i_1 < i_2 < \dots < i_{k-1} \le L\}$ such that $\pi|_{J'}=(\pi(i_1), \pi({i_2}), \dots, \pi({i_{k-1}}))$ is order-isomorphic to $\tau$.

Now consider the index set
\[
J = \{\,0,\, i_1,\, i_2, \dots, i_{k-1}\,\}
\subseteq \{0,1,\dots,L\}.
\]
for the permutation $\gamma$. Then,
\[
\gamma|_J
   = (\gamma_0, \gamma({i_1}), \dots, \gamma({i_{k-1}}))
   = (\,L+1,\, \pi({i_1}), \dots, \pi({i_{k-1}})\,).
\]

By construction,
\[
L+1 > \pi_{i_t} \qquad \text{for all } t,
\]
so the first element of $\gamma|_J$ is the unique maximum, just as $k$ is the unique maximum in $\rho_r(\sigma)$.  
The remaining entries of $\gamma|_J$ are order-isomorphic to $\tau$.  
Therefore, $\gamma|_J$ is order isomorphic to $\rho_r(\sigma).$

Since $\sigma$ was arbitrary, $\gamma$ contains a representative of every cyclic equivalence class of $\mathbf{S}_k$ and is therefore a circular $k$-superpattern.

Finally, $\gamma$ has length $L+1$, so
\[
L_{\mathrm{circ}}(k) \le L(k-1) + 1.
\]
\end{proof}

\begin{rem}
    Using the Engen–Vatter bound~\cite{engen2020containing} for $L(k-1)$ gives the explicit inequality,
\[
L_{\mathrm{circ}}(k)
   \;\le\; \left\lceil \frac{(k-1)^2 + 1}{2} \right\rceil + 1
   \;=\; \left\lceil \frac{k^2 - 2k + 4}{2} \right\rceil .
\] 
\end{rem}
\begin{eg}
For $k = 4$, exhaustive search confirms that the minimal circular $4$-superpattern has length~$6$, consistent with Theorem~\ref{thm:circ_length}.  
For $k = 5$, the permutation,
\[
(846271359)
\]
is a circular $5$-superpattern of length~$9$, while the theorem gives $L_{\mathrm{circ}}(5) \le 10$.  
Thus the construction is close to optimal but not tight in this case.
\end{eg}

\section{Exact subsequence containment in Zigzag words}\label{zigzagpattern}
The remainder of the paper studies zigzag constructions. To understand how efficiently a permutation embeds into a zigzag word, we assign a local cost to transitions between consecutive symbols. These local costs combine to form a score function, which determines the minimum number of zigzag runs required for an embedding.
We revisit the \emph{zigzag construction} of Engen and Vatter~\cite[Prop.~10]{engen2020containing}, presenting a streamlined formulation adapted to the circular framework.  
Our focus is the direct embedding of permutation patterns within finite zigzag words, emphasizing exact containment rather than order-isomorphic subsequences.

Let $\sigma \in \mathbf{S}_k$ be arbitrary.  
We examine its placement as a subsequence of $\mathrm{zz}(m,q)$, where $(m,q)$ denote the number of runs and the alphabet width, respectively. \par 
We re-derive proposition 10 of Engen and Vatter that: For every $\sigma \in \mathbf{S}_k$, either $\sigma$ or $\sigma^+$ occurs as an exact subsequence of  $\mathrm{zz}(k,k+1)$  (Illustrated in Figure \ref{exactlyfig} and Figure \ref{liftedfig}). Although Proposition 10 was proved by Engen and Vatter, proving it using our score-function formulation naturally extend it to the circular setting developed in the remainder of the paper.

Further, we prove that for odd $k$, For every $\sigma \in \mathbf{S}_k$, either $\sigma$ or $\sigma^+$ occurs as an exact cyclic subsequence of  $\mathrm{zz}(k-1,k+1)$.
\readlist\seq{1,3,4,2,1,3}
\readlist\target{1,2,3}

\begin{figure}[H]
\centering
\begin{tikzpicture}

\newcount\ptr
\ptr=1

\foreach \i in {1,...,6}{

    \node (n\i) at (\i,\seq[\i]) {\seq[\i]};

    \ifnum\ptr<5
        \ifnum\seq[\i]=\target[\the\ptr]
            \draw[red,thick] (n\i) circle (0.30);
            \global\advance\ptr by1
        \fi
    \fi

}

\end{tikzpicture}
\caption{ The circled entries show an exact embedding of the permutation $\sigma = \{ 1,2,3 \}$  as an exact subsequence of $zz(3,4)$}\label{exactlyfig}
\end{figure}

\readlist\seq{1,3,4,2,1,3}
\readlist\target{4,2,3}

\begin{figure}[H]
\centering
\begin{tikzpicture}

\newcount\ptr
\ptr=1

\foreach \i in {1,...,6}{

    \node (n\i) at (\i,\seq[\i]) {\seq[\i]};

    \ifnum\ptr<5
        \ifnum\seq[\i]=\target[\the\ptr]
            \draw[red,thick] (n\i) circle (0.30);
            \global\advance\ptr by1
        \fi
    \fi

}

\end{tikzpicture}
\caption{ The permutation $\sigma = \{ 3,1,2\}$ does not appear as an exact subsequence of $zz(3,4)$ but lifted permutation $\sigma^+ = \{4,2,3\}$ appears as an exact subsequence (depicted by circled entries).}\label{liftedfig}
\end{figure}

\subsection{Parity and local cost.}\label{sec:cost}
For each $x \in \{1,2,\dots,q\}$ define its \emph{parity sign}
\[
p_x =
\begin{cases}
\phantom{-}1, & \text{if $x$ is even},\\[3pt]
-1, & \text{if $x$ is odd.}
\end{cases}
\]
For $x,y \in \{1,\dots,q\}$ the \emph{local cost} $C_{xy}$ quantifies how many additional
zigzag runs are required to position the number $y$ after $x$. It is defined by
\[
C_{xy}=\begin{cases}
    -1 &\text{no additional run is required to position $y$ after $x$,}\\[3pt]
    \phantom{-}0 &\text{one additional run is required to position $y$ after $x$,}\\[3pt]
    \phantom{-}1 &\text{two additional runs are required to position $y$ after $x$.}
\end{cases}
\]

\begin{lemma}
For $x,y\in\{1,\dots,q\}$, the local cost function is given by
\begin{equation}\label{cxy}
C_{xy}
   = \delta_{xy}
     -\frac{(p_x p_y + 1)}{2}\,
     \operatorname{sgn}(x-y)\,p_x,
\end{equation}
where $\operatorname{sgn}(t)$ denotes the sign of $t$.
\end{lemma}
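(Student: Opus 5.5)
The plan is a direct case analysis on the parities of $x$ and $y$ and on their relative order. It rests on one structural observation about $\mathrm{zz}(m,q)$: every odd symbol appears only in odd-indexed runs, where the odd symbols are listed in increasing order. Every even symbol appears only in even-indexed runs, where the even symbols are listed in decreasing order. In particular, each symbol of the appropriate parity occurs exactly once in each run of that parity, and consecutive runs alternate parity. So if $x$ is placed in some run $R_j$, the earliest occurrence of $y$ after it lies in $R_j$ itself, in $R_{j+1}$, or in $R_{j+2}$. This is what makes $C_{xy}\in\{-1,0,1\}$ well defined; I would take the cost to be the one realised by this greedy earliest placement.

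First I would record the two factors in \eqref{cxy}. The factor $\tfrac{p_xp_y+1}{2}$ equals $1$ when $x$ and $y$ have the same parity and $0$ otherwise. The factor $\operatorname{sgn}(x-y)\,p_x$ encodes whether $y$ lies \emph{after} $x$ within a run of $x$'s parity: for odd $x$ (where $p_x=-1$, increasing run) this happens iff $y>x$, and for even $x$ (where $p_x=1$, decreasing run) iff $y<x$. I would then check three cases against this.

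\emph{Case $x=y$.} The next copy of $x$ is in $R_{j+2}$, so two additional runs are needed and $C_{xx}=1$. The formula gives $\delta_{xx}-1\cdot 0=1$.

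\emph{Case $x\neq y$ of different parity.} Then $y$ does not occur in $R_j$ but does occur in $R_{j+1}$, so $C_{xy}=0$. The formula gives $0-0=0$, since $p_xp_y=-1$.

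\emph{Case $x\neq y$ of the same parity.} If $y$ follows $x$ inside $R_j$, the cost is $-1$. Otherwise $R_{j+1}$ has the wrong parity, so $y$ first appears in $R_{j+2}$ and the cost is $1$. The formula reduces to $-\operatorname{sgn}(x-y)\,p_x$. For odd $x$ this is $\operatorname{sgn}(x-y)$, which is $-1$ exactly when $y>x$. For even $x$ it is $-\operatorname{sgn}(x-y)$, which is $-1$ exactly when $y<x$. Both match the run orientation.

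The only delicate point is making the definition of $C_{xy}$ precise. I would state explicitly that the cost refers to the earliest occurrence of $y$ strictly after a given occurrence of $x$, and note that this greedy choice is never worse for later placements. I would also check the boundary symbol $q$, whose position in the runs depends on the parity of $q$. It still occurs once per run of its own parity, at the extreme end consistent with that run's orientation, so the same case analysis applies without change.
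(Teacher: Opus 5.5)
Your proposal is correct and takes essentially the same route as the paper: you check the formula over the cases $x=y$, opposite parity, and same parity, splitting the last by whether $y$ follows $x$ within the run (cost $-1$) or first appears two runs later (cost $1$), using the increasing order of odd runs and the decreasing order of even runs. Your explicit definition of $C_{xy}$ as the run offset of the earliest occurrence of $y$ after a fixed occurrence of $x$, together with your remark on the boundary symbol $q$, makes precise what the paper leaves implicit.
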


\begin{proof}
We verify that the right-hand side of \eqref{cxy} reproduces the required values of
$C_{xy}$ in each parity and order case.
\begin{enumerate}
    \item \textbf{Same parity, and $y$ lies in the same run as $x$.}
\smallskip
If $x,y$ are both odd and $y>x$, the zigzag order places $y$ in the same run as $x$.
Then $p_x p_y = 1$, $\operatorname{sgn}(x-y) = -1$, and the right-hand side becomes
\[
\delta_{xy} - \frac{(1+1)}{2}(-1)p_x = 0 - (1)(-1)(-1)= -1,
\]
matching $C_{xy}=-1$.  
The same argument holds when $x,y$ are both even and $y<x$.

\item \textbf{Opposite parity.}
\smallskip
If $x$ and $y$ have different parity, then $p_x p_y = -1$, so
\[
\frac{p_x p_y + 1}{2} = 0,
\]
and the right-hand side reduces to $\delta_{xy}$.  
Since $x\neq y$, this equals $0$, indicating exactly one extra run is needed.  
Thus $C_{xy}=0$, as required.

\item \textbf{Same parity, but $y$ lies two runs after $x$.}

\smallskip
If $x,y$ are both odd and $y<x$, then $y$ appears two runs after $x$.  
Here $p_x p_y = 1$ and $\operatorname{sgn}(x-y)=+1$, giving
\[
0 - \frac{2}{2}(1)p_x = -p_x = 1,
\]
since $p_x=-1$.  
Likewise, when $x,y$ are both even and $y>x$, we obtain $C_{xy}=1$.

\item \textbf{The case $x=y$.}

\smallskip
Since a symbol cannot follow itself within the same zigzag pass, placing $y=x$ again
requires two additional runs.  
In this case $\delta_{xy}=1$ and $\operatorname{sgn}(0)=0$, so $C_{xx} = 1 - 0 = 1$, as required.
\end{enumerate}

\medskip
This covers all parity and ordering possibilities, establishing the formula.
\end{proof}

\begin{definition}[Score function]
    The score of a permutation, $\sigma\in S_k$ is the minimum number of runs in a zigzag word containing $\sigma$ minus the length of $\sigma$.
\end{definition}
\begin{lemma}\label{score}
For a permutation $\sigma = (\sigma_1,\sigma_2,\dots,\sigma_k)$, its \emph{score} is given by the formula,
\begin{equation}\label{ssigma}
S(\sigma) = \sum_{i=1}^{k-1} C_{\sigma_i \sigma_{i+1}}
         + C_{\cdot,\,\sigma_1},    
\end{equation}
where the initial term is
\[
C_{\cdot,\,x} = \frac{1 + p_x}{2}.
\]
\end{lemma}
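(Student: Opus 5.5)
The plan is to compute the minimum number of runs by a greedy left-to-right embedding, and then show that this greedy embedding is optimal. I work in the zigzag word $R_1R_2R_3\cdots$ with alphabet width $q\ge k$, taking as many runs as needed. The basic structural fact is that run $R_j$ contains exactly the symbols whose parity matches that of $j$. Odd runs list the odd symbols in increasing order and even runs list the even symbols in decreasing order. Hence a symbol $x$ occurs in every run of parity equal to that of $x$, and in no other run.

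First, place $\sigma_1$ at its earliest occurrence. This is run $1$ if $\sigma_1$ is odd and run $2$ if $\sigma_1$ is even. In both cases the run index is $1+\frac{1+p_{\sigma_1}}{2}=1+C_{\cdot,\sigma_1}$.

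Inductively, suppose $\sigma_i=x$ has been placed at its earliest possible position, in some run $R_j$; then $j$ has the parity of $x$. The earliest occurrence of $y=\sigma_{i+1}$ after that position is determined by three cases.
\begin{enumerate}
\item If $y$ has the same parity as $x$ and comes after $x$ within $R_j$ (that is, $y>x$ for odd symbols, $y<x$ for even), then $y$ is placed in run $j$.
\item If $y$ has the opposite parity, then $y$ is placed in run $j+1$.
\item Otherwise ($y$ has the same parity but comes before $x$ in $R_j$, or $y=x$), then $y$ is placed in run $j+2$.
\end{enumerate}
These are precisely the cases verified in the preceding Lemma for \eqref{cxy}. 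So the run index advances by exactly $1+C_{xy}$. The key point is that this advance depends only on $x$ and $y$, not on $j$, because run parity always matches symbol parity.

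Summing over the $k$ placements, the last symbol lies in run
\[
1+C_{\cdot,\sigma_1}+\sum_{i=1}^{k-1}\bigl(1+C_{\sigma_i\sigma_{i+1}}\bigr)=k+C_{\cdot,\sigma_1}+\sum_{i=1}^{k-1}C_{\sigma_i\sigma_{i+1}}.
\]
Subtracting $k$ gives \eqref{ssigma}, provided greedy is optimal.

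The main obstacle is justifying that the greedy embedding uses the fewest runs. I would prove this by the standard exchange argument. By induction on $i$, the greedy position $g_i$ of $\sigma_i$ is at most the position $e_i$ of $\sigma_i$ in any embedding of $\sigma$. The base case holds because $g_1$ is the first occurrence of $\sigma_1$. For the step, $e_{i+1}>e_i\ge g_i$, and $g_{i+1}$ is the first occurrence of $\sigma_{i+1}$ after $g_i$, so $g_{i+1}\le e_{i+1}$. Hence the run containing $\sigma_k$ under greedy is minimal. The number of runs needed is the run index of the last placed symbol, since the runs are consecutive blocks. Finally, I would remark that $q\ge k$ ensures every symbol of $\sigma$ is available, so the word $\mathrm{zz}(m,q)$ with $m$ equal to this run index indeed contains $\sigma$.
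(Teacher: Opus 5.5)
Your proposal is correct and follows essentially the same route as the paper. Both use a greedy earliest-occurrence embedding in which each step advances the run index by $1+C_{xy}$, independently of the current run because run parity matches symbol parity, and the sum gives $k+S(\sigma)$; both then justify optimality with a greedy-stays-ahead induction, which the paper gives in the Remark following its proof. Your position-based version of that induction ($g_{i+1}\le e_{i+1}$ because $e_{i+1}$ is an occurrence of $\sigma_{i+1}$ after $g_i$) is, if anything, a cleaner rendering of the paper's run-index argument.
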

\begin{proof}
Place the symbols of $\sigma$ one by one, in the given order, using the following greedy rule: when placing the next symbol, place it in the earliest run (counting forward in the zigzag sequence) that preserves the relative order and parity constraints; if no such position in the current or immediately following run is possible, open additional runs as needed.  

By the definition of $C_{\cdot,\sigma_1}$, the greedy procedure requires exactly $C_{\cdot,\sigma_1}$ extra runs before placing $\sigma_1$ (this accounts for whether the first available run can accept $\sigma_1$ or whether an additional run must be inserted first).  After $\sigma_1$ is placed, when we place $\sigma_2$ the number of extra runs that the greedy procedure must open is by definition $C_{\sigma_1\sigma_2}$.  Proceeding inductively, the greedy placement opens exactly $C_{\sigma_i\sigma_{i+1}}$ extra runs when placing $\sigma_{i+1}$ after $\sigma_i$.  Therefore the total number of extra runs opened by this construction equals
\[
C_{\cdot,\sigma_1}+\sum_{i=1}^{k-1} C_{\sigma_i\sigma_{i+1}}.
\]
Since each symbol occupies one position in some run, the total number of runs used by this construction is
\begin{equation}\label{ssigma_proof}
k + \biggl(C_{\cdot,\sigma_1}+\sum_{i=1}^{k-1} C_{\sigma_i\sigma_{i+1}}\biggr).
\end{equation}
The local costs are computed by using the parity and do not depend on choices made elsewhere. Hence, we can say that the minimum number of runs required to space $\sigma$ is (\ref{ssigma_proof}). This establishes, 
equation (\ref{ssigma_proof}).

\end{proof}

\begin{rem}
The optimality of the greedy placement follows by induction. 
Let $\phi$ be any
feasible embedding, and let $g$ denote the greedy embedding. Let $|\phi_{1:j}|$ denote the number of runs required for embedding $\sigma_{1:j}$ via $\phi$.
The base case:  the first symbol of the pattern must be mapped to either $R_1$ or $R_2$ depending on its odd or even.  
The induction hypothesis: Suppose the first
$i-1$ symbols have been placed and $|\phi_{1:i-1}| \geq |g_{1:i-1}|$. 
Implication: 
From the hypothesis, $
g(\sigma_{i-1}) \mapsto (\sigma_{i-1},R_j)$, and $ 
\phi(\sigma_{i-1}) \mapsto (\sigma_{i-1},R_k)$,
then necessarily $k\ge j$. If $k=j$, then $\phi,g : \sigma_{i-1} \to (\sigma_{i-1},R_j)$ (because each symbol in a run appears exactly once). 
Hence the set of feasible placements for $\sigma_i$ is identical in both embeddings, and the greedy algorithm chooses the earliest feasible run leading to $|g_{1:i}| \leq |\phi_{1:i}|$. If $k>j$, then set of feasible placement for $\sigma_i$ in $g$ is more than that of $ \phi$. Hence the $i$-th symbol in the greedy embedding is never placed
later than in $\phi$.

Proceeding inductively, every symbol is assigned to a run whose index is at
most the corresponding run index in $\phi$. Therefore the last run used by the
greedy embedding is no larger than the last run used by $\phi$. Consequently,
the greedy embedding uses the minimum possible number of zigzag runs.

\end{rem}

\begin{rem}
The sign convention follows that of Engen and Vatter, but omits the terms associated with repeated symbols, which do not occur in permutations.    
\end{rem}

\begin{lemma}[\textbf{Lift identity}]  $S(\sigma) + S(\sigma^+) = 1$.
\end{lemma}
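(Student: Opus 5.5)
Substituting the explicit formula of Lemma~\ref{score} into both scores and comparing term by term should give the identity directly. The key observation is that the lift $x\mapsto x+1$ flips parity: $p_{x+1}=-p_x$ for every $x$. Meanwhile it preserves differences and equalities, so $\operatorname{sgn}((x+1)-(y+1))=\operatorname{sgn}(x-y)$ and $\delta_{x+1,y+1}=\delta_{xy}$.

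First I treat the transition costs. Using \eqref{cxy},
\[
C_{x+1,y+1}=\delta_{xy}-\frac{(p_{x+1}p_{y+1}+1)}{2}\operatorname{sgn}(x-y)\,p_{x+1}
=\delta_{xy}+\frac{(p_xp_y+1)}{2}\operatorname{sgn}(x-y)\,p_x ,
\]
since $p_{x+1}p_{y+1}=p_xp_y$ and $p_{x+1}=-p_x$. Adding this to $C_{xy}$ cancels the sign term, leaving
\[
C_{xy}+C_{x+1,y+1}=2\delta_{xy}.
\]
For a permutation, consecutive entries are distinct, so $\delta_{\sigma_i\sigma_{i+1}}=0$. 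Hence
\[
\sum_{i=1}^{k-1}\bigl(C_{\sigma_i\sigma_{i+1}}+C_{\sigma_i+1,\sigma_{i+1}+1}\bigr)=0 .
\]

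Next I handle the initial term:
\[
C_{\cdot,\sigma_1}+C_{\cdot,\sigma_1+1}=\frac{1+p_{\sigma_1}}{2}+\frac{1-p_{\sigma_1}}{2}=1 .
\]
Summing the two pieces and invoking \eqref{ssigma} for both $\sigma$ and $\sigma^+$ gives $S(\sigma)+S(\sigma^+)=1$.

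The main subtlety is that $\sigma^+$ takes values in $\{2,\dots,k+1\}$ rather than in $[k]$, so the score must be read for words over the alphabet $\{1,\dots,q\}$ with $q\ge k+1$. I will note that Lemma~\ref{score} and formula \eqref{cxy} depend only on the parities and relative orders of the symbols, not on their being a permutation of $[k]$, so they apply verbatim to $\sigma^+$. The rest is the algebra above.
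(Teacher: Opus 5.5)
Your proposal is correct and is essentially the paper's proof. The paper rests on the same two identities: $C_{x+1,y+1}=-C_{xy}$ for distinct $x,y$ (which you derive explicitly as $C_{xy}+C_{x+1,y+1}=2\delta_{xy}$ using $p_{x+1}=-p_x$), and $C_{\cdot,x+1}=\frac{1-p_x}{2}$, summed over the terms of the score formula; you simply supply the algebra and the alphabet-width remark that the paper leaves implicit.
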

\begin{proof}
    For permutations, the local costs satisfy 
\[
C_{x+1,\,y+1} = -\,C_{xy}, \qquad
C_{\cdot,\,x+1} = \frac{1 - p_x}{2}.
\]
Consequently, $S(\sigma^+) + S(\sigma) = 1.$
\end{proof}

\begin{theorem}\cite[Proposition 10]{engen2020containing}
    For every permutation $\sigma \in \mathbf{S}_k$, either $\sigma$ or $\sigma^+$ occur as an exact subsequence of $zz(k,k+1)$.
\end{theorem}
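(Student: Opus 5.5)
The plan is to combine Lemma~\ref{score} with the lift identity. By the definition of the score, $\sigma$ embeds as an exact subsequence of $\mathrm{zz}(m,q)$ exactly when $m \ge k + S(\sigma)$. This also requires the alphabet width to be large enough to contain every symbol of $\sigma$. Take $q=k+1$, so that both $\sigma$ (symbols in $[k]$) and $\sigma^+$ (symbols in $\{2,\dots,k+1\}$) are words over $\{1,\dots,q\}$. It then suffices to show that $\min\{S(\sigma),S(\sigma^+)\}\le 0$.

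The key step is the lift identity $S(\sigma)+S(\sigma^+)=1$. I will first check its two ingredients directly from formula~\eqref{cxy}. Since $p_{x+1}=-p_x$, the factor $(p_xp_y+1)/2$ is unchanged under $x\mapsto x+1$, $y\mapsto y+1$. The sign $\operatorname{sgn}(x-y)$ is also unchanged, while $p_x$ flips. Because $\sigma$ is a permutation, $\delta_{\sigma_i\sigma_{i+1}}=0$, so $C_{\sigma_i+1,\sigma_{i+1}+1}=-C_{\sigma_i\sigma_{i+1}}$. For the initial term we have $C_{\cdot,x}+C_{\cdot,x+1}=\frac{1+p_x}{2}+\frac{1-p_x}{2}=1$. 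Summing over the consecutive pairs, the transition terms cancel and the initial terms contribute $1$, which gives the identity.

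Since the score is an integer, $S(\sigma)+S(\sigma^+)=1$ forces one of the two to be at most $0$. Indeed, if both were at least $1$ the sum would be at least $2$. Let $\tau\in\{\sigma,\sigma^+\}$ be a choice with $S(\tau)\le 0$. By Lemma~\ref{score}, the greedy embedding of $\tau$ uses exactly $k+S(\tau)\le k$ runs of the zigzag word of width $k+1$. Since $\mathrm{zz}(k,k+1)$ begins with those runs, $\tau$ occurs as an exact subsequence of $\mathrm{zz}(k,k+1)$.

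The main point needing care is the interplay with the alphabet width. Lemma~\ref{score} computes the score using only parity and order, so it implicitly assumes every symbol used appears in the runs. With $q=k+1$, every value in $\{1,\dots,k+1\}$ appears in the appropriate run, because odd runs list all odd values up to $q$ and even runs list all even values. I will also need to confirm that the initial-cost convention ($C_{\cdot,x}=0$ for odd $x$, placed in $R_1$; $C_{\cdot,x}=1$ for even $x$, placed in $R_2$) matches the run count. Here one must be careful to read the run lists as consisting of numbers of a single parity. Once this is confirmed, the count $k+S(\tau)$ is the index of the last run used, and the conclusion follows.
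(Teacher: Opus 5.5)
Your proposal is correct and follows the paper's proof. Both arguments combine $R(\tau)=k+S(\tau)$ from Lemma~\ref{score} with the lift identity $S(\sigma)+S(\sigma^+)=1$, which forces one of the two integer scores to be nonpositive. Your extra checks simply make explicit what the paper leaves implicit: deriving $C_{x+1,y+1}=-C_{xy}$ from \eqref{cxy} using $\delta_{\sigma_i\sigma_{i+1}}=0$, and confirming that width $k+1$ accommodates both $\sigma$ and $\sigma^+$.
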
\label{embed-subseq}

\begin{proof}
    Let $R(\sigma)$ be the minimum number of runs needed to place $\sigma$ in a zigzag word. Then $R(\sigma)=k+S(\sigma)$ and $R(\sigma^+)=k+S(\sigma^+).$ From the lift identity, we can conclude that either one of the scores is less than or equal to zero. That implies either $\sigma$ or $\sigma^+$ occur as a subsequence of $zz(k,k+1)$. 
\end{proof}

\subsubsection{Shift identities}
In this section we look at the score of shifting an embedded pattern on the zigzag word  $R_1,R_2 , \cdots$.  Analogous identities for shifting an embedded pattern in $R_2,R_3, \cdots$ is discussed in Appendix \ref{appendix}. 

While verifying if the zigzag word $zz(k,k+1)$ contains a certain pattern, it becomes important to check if the last run of $zz(k,k+1)$ contains part of the pattern. If it doesn't, then we can shorten the length of the superpattern created. Let $S'(\pi)$ be the score of shifting the permutation $\pi$ one run in the zigzag word. Then we have, 
    \begin{equation}\label{shiftsc}
        S'(\pi) = \sum C_{x,y} + \frac{1 - p_{\pi(1)}}{2}. 
    \end{equation}
\begin{lemma} We have the following conditional relations for the shifted score:
\begin{enumerate}
    \item If the score $S(\pi) = 0$ and $\pi(1)$ is even, then $S'(\pi) =-1$. 
 
    \item  If the score $S(\pi) = 0$ and $\pi(1)$ is odd, then $S'(\pi^+)  = 0$.  
    
    \item If the score $S(\pi) = 1$ and $\pi(1)$ is even, then $S'(\pi) = 0$.  

    \item  If the score $S(\pi) = 1$ and $\pi(1)$ is odd, then  $S'(\pi^+)  = -1$.  
\end{enumerate}
The conclusions remain valid if the embedding is shifted by one run to the right or left due to the symmetry.
\end{lemma}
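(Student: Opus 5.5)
The plan is to compare the shifted score \eqref{shiftsc} with the ordinary score of Lemma~\ref{score} term by term. The two formulas share the transition sum $\Sigma(\pi):=\sum_{i=1}^{k-1} C_{\pi_i\pi_{i+1}}$ and differ only in the initial term. We have $(1+p_{\pi(1)})/2$ for $S$ and $(1-p_{\pi(1)})/2$ for $S'$. Subtracting gives the single identity
\[
S'(\pi) = S(\pi) - p_{\pi(1)},
\]
and I would organize the proof around it, splitting on the parity of $\pi(1)$.

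\textbf{Case $\pi(1)$ even.} Here $p_{\pi(1)}=1$, so $S'(\pi)=S(\pi)-1$. Substituting $S(\pi)=0$ gives item~1, $S'(\pi)=-1$. Substituting $S(\pi)=1$ gives item~3, $S'(\pi)=0$. Nothing else is needed.

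\textbf{Case $\pi(1)$ odd.} Now shifting $\pi$ itself costs a run, so I pass to $\pi^+$ using the relations from the proof of the lift identity. These are $C_{x+1,y+1}=-C_{xy}$, which gives $\Sigma(\pi^+)=-\Sigma(\pi)$, and $p_{x+1}=-p_x$. The initial term of the shifted score of $\pi^+$ is then
\[
\frac{1-p_{\pi(1)+1}}{2}=\frac{1+p_{\pi(1)}}{2},
\]
which equals $0$ because $p_{\pi(1)}=-1$. Hence $S'(\pi^+)=-\Sigma(\pi)$. On the other hand, $S(\pi)=\Sigma(\pi)+(1+p_{\pi(1)})/2=\Sigma(\pi)$, again because $\pi(1)$ is odd. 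Therefore $S'(\pi^+)=-S(\pi)$. This yields item~2 ($S(\pi)=0\Rightarrow S'(\pi^+)=0$) and item~4 ($S(\pi)=1\Rightarrow S'(\pi^+)=-1$).

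The only step needing care is the final remark about shifting left or right. I would justify it by noting that $C_{xy}$ depends only on the parities and relative order of $x,y$, not on which run is occupied. So a one-run shift in either direction changes only the initial placement term, and that term toggles between $(1+p)/2$ and $(1-p)/2$. The same subtraction argument then applies verbatim, with the roles of the two initial terms exchanged. This also matches the analogous identities for $R_2,R_3,\dots$ deferred to the appendix.

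I expect the main obstacle to be making sure the shifted initial term is correctly interpreted as the cost of starting in $R_2$ rather than $R_1$. Once \eqref{shiftsc} is accepted as stated, the rest is bookkeeping with $p_{x+1}=-p_x$.
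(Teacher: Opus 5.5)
Your proposal is correct and follows the paper's proof. Both proofs swap the initial term $(1+p_{\pi(1)})/2$ for $(1-p_{\pi(1)})/2$ in \eqref{shiftsc}, giving $S'(\pi)=S(\pi)-p_{\pi(1)}$ for even $\pi(1)$, and then pass to $\pi^+$ for odd $\pi(1)$. The only cosmetic difference is in the odd case: the paper writes $S'(\pi^+)=S(\pi^+)-p_{\pi(1)+1}$ and invokes the lift identity $S(\pi^+)=1-S(\pi)$, while you apply $C_{x+1,y+1}=-C_{xy}$ directly to reach $S'(\pi^+)=-S(\pi)$, which is the same computation unpacked.
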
\label{shifted_score}
\begin{proof}
  The proof is straightforward from equation (\ref{shiftsc})
    \begin{enumerate}
    \item If the score $S(\pi) = 0$ and $\pi(1)$ is even, then 
    \begin{align*}
        S'(\pi) &= S(\pi) - \left(\frac{1+ p_{\pi(1)}}{2}\right) + \frac{1 - p_{\pi(1)}}{2} \\
        &= -p_{\pi(1)} =-1. 
    \end{align*}
    \item  If the score $S(\pi) = 0$ and $\pi(1)$ is odd, then 
       \begin{align*}
        S'(\pi^+) &= S(\pi^+) - \left(\frac{1+ p_{\pi(1)+1}}{2}\right) + \frac{1 - p_{\pi(1)+1}}{2} \\
        &= 1-p_{\pi(1)+1} = 0.  
    \end{align*}
    
    \item If the score $S(\pi) = 1$ and $\pi(1)$ is even, then 
       \begin{align*}
        S'(\pi) &= S(\pi) - \left(\frac{1+ p_{\pi(1)}}{2}\right) + \frac{1 - p_{\pi(1)}}{2} \\
        &= 1-p_{\pi(1)} = 0.  
    \end{align*}

    \item  If the score $S(\pi) = 1$ and $\pi(1)$ is odd, then 
       \begin{align*}
        S'(\pi^+) &= S(\pi^+) - \left(\frac{1+ p_{\pi(1)+1}}{2}\right) + \frac{1 - p_{\pi(1)+1}}{2} \\
        &= -p_{\pi(1)+1} = -1.  
    \end{align*}
    \end{enumerate}
\end{proof}

\subsection{Circular Score function}
We now extend the concept of the local cost to the circular superpattern setting and prove an analog of Theorem \ref{embed-subseq} in context. 
For a cyclic equivalence class of a permutation, define the local circular cost of placing elements in the zigzag word as in Section~\ref{sec:cost}:
\[
C_{xy} =  \delta_{xy}-\frac{(p_x p_y + 1)}{2} \, \sign(x-y) \, p_x.
\]  
Here, the cost of placing the first element is taken with respect to the last element, i.e.,
\[
C_{\cdot,\sigma(1)} = C_{\sigma(k),\sigma(1)}.
\]
The circular score of the permutation $\sigma$ is then defined by
\[
S^c(\sigma) = \sum_{i=1}^{k} C_{\sigma(i),\, \sigma((i+1) \bmod k)}.
\]
Any cyclic shift of $\sigma$ preserves the multiset of consecutive pairs, and hence $S^c(\sigma)$ is invariant under rotation.

\begin{theorem}
    The quantity $k+S^c$ is the minimum even number of runs required to embed $\sigma$. 
\end{theorem}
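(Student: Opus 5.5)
The plan is to interpret a circular embedding of $\sigma$ as a periodic placement on the bi-infinite zigzag word $\cdots R_1R_2R_1R_2\cdots$. We place $\sigma(1),\dots,\sigma(k)$ and then continue with $\sigma(1)$ again. The quantity to minimise is the number of runs traversed before the pattern returns to its starting position one period later.

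\textbf{Reducing the return condition to parity.} The run containing a symbol $x$ is determined, up to an even shift, by the parity of $x$: odd symbols lie only in odd runs and even symbols only in even runs. Hence when the placement wraps around and $\sigma(1)$ reappears, it must sit in a run of the same parity as the run of its first occurrence. So the total number of runs used in one period is automatically even. This explains the word ``even'' in the statement. It also means we must count runs between two consecutive occurrences of $\sigma(1)$, rather than runs from the start of the word.

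\textbf{Counting via the greedy argument.} We reuse the greedy argument of Lemma~\ref{score} together with the optimality remark following it. Starting from $\sigma(1)$ placed in some run, the greedy procedure places each $\sigma(i+1)$ after $\sigma(i)$. Including the step from $\sigma(i)$ to $\sigma(i+1)$, it advances exactly $1+C_{\sigma(i)\sigma(i+1)}$ runs. This is because $C=-1,0,1$ corresponds to advancing $0,1,2$ runs respectively, by the definition of the local cost and the case analysis of the previous lemma. Summing over the $k$ cyclic transitions, including the wrap-around step $\sigma(k)\to\sigma(1)$ whose cost is $C_{\cdot,\sigma(1)}=C_{\sigma(k),\sigma(1)}$, the second copy of $\sigma(1)$ lies exactly
\[
\sum_{i=1}^{k}\bigl(1+C_{\sigma(i),\sigma(i+1 \bmod k)}\bigr)=k+S^c(\sigma)
\]
runs after the first. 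By the parity observation this number is even, which can also be checked directly. Each term $1+C_{xy}$ has the parity of $p_xp_y$'s deviation: it equals $1$ exactly when $x$ and $y$ have opposite parity, and is even otherwise. The number of parity changes around a cycle is even.

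\textbf{Minimality.} Here we invoke the remark's induction. Any circular embedding, cut at an occurrence of $\sigma(1)$, is a linear embedding of $\sigma(1)\cdots\sigma(k)\sigma(1)$ starting in that run. The greedy embedding never places a symbol in a later run than any feasible embedding does. Therefore no embedding closes up in fewer runs. Rotation invariance of $S^c$ shows that the choice of cut point is irrelevant.

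\textbf{Main obstacle.} The main obstacle is making precise what ``runs required'' means circularly. One must check that the first occurrence of $\sigma(1)$ and its wrapped copy are counted consistently, so that the run containing $\sigma(1)$ is counted once, and that the period is well defined independently of where the embedding starts. I would handle this by fixing $\sigma(1)$ in $R_1$ or $R_2$ according to its parity, and defining the cost of the period as the run index of the wrapped copy minus the run index of $\sigma(1)$.
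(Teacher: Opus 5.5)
Your argument is correct, but it is organised differently from the paper's. The paper never passes to the periodic word. It fixes the rotation of $\sigma$ with smallest linear score $S$ and writes $S^c=S-C_{\cdot,\sigma(1)}+C_{\sigma(k),\sigma(1)}$. It then does a case analysis on the parity of $\sigma(1)$ and of the last run $k+S$ (equivalently of $\sigma(k)$) to see that $k+S^c$ is even and within one of $k+S$.

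You instead work directly on the periodic word:
\begin{enumerate}
\item you compute the greedy period as $\sum_i\bigl(1+C_{\sigma(i),\sigma(i+1)}\bigr)$;
\item you get evenness from the even number of parity changes around a cycle (the same fact behind Lemma~\ref{oddsum});
\item you get minimality by comparing an arbitrary circular embedding, cut at $\sigma(1)$, with the greedy one.
\end{enumerate}

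Your route makes the lower bound explicit and needs no distinguished rotation. In the paper, two steps rest on the minimality of the chosen rotation without this being said: excluding $k+S^c=k+S-1$ (both $\sigma(1),\sigma(k)$ odd with $\sigma(k)<\sigma(1)$), and forcing $C_{\sigma(k),\sigma(1)}=1$ in the even case.

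The paper's route, in turn, ties $S^c$ to the linear score of an actual rotation, which is the form used in Theorem~\ref{exactsubseq}. To recover that from your version, add one observation. Cutting the circular word of $m$ runs ($m$ even) between $R_m$ and $R_1$ gives $\mathrm{zz}(m,q)$. So a circular embedding of period $m$ is exactly a linear embedding of some rotation of $\sigma$ into $\mathrm{zz}(m,q)$.

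Also note the form in which you use greedy optimality: for the word $\sigma(1)\cdots\sigma(k)\sigma(1)$ with its first letter pinned to a given run. The remark states it for a permutation starting at $R_1$; the same induction covers your case.
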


\begin{proof}
Since the circular score is invariant under cyclic rotation, we may choose the cyclic representative of $\sigma$ for which the linear score $S$ is minimum. Recall that $k+S$ is the minimum number of runs required for the corresponding linear zigzag embedding. Moreover,
\[
S^c=S-C_{\cdot,\sigma(1)}+C_{\sigma(k),\sigma(1)},
\]
and hence
\[
k+S^c
=
k+S-C_{\cdot,\sigma(1)}+C_{\sigma(k),\sigma(1)}.
\]

We first consider the case when $\sigma(1)$ is odd. If $k+S$ is even, then
\[
-C_{\cdot,\sigma(1)}+C_{\sigma(k),\sigma(1)}=0,
\]
since both $C_{\cdot,\sigma(1)}$ and $C_{\sigma(k),\sigma(1)}$ are zero. If $k+S$ is odd, then
\[
-C_{\cdot,\sigma(1)}+C_{\sigma(k),\sigma(1)}=\pm1,
\]
depending on whether $\sigma(k)<\sigma(1)$ or $\sigma(k)>\sigma(1)$. Thus, $k+S^c$ is even and differs from the minimum linear embedding length by at most one.

Now suppose that $\sigma(1)$ is even. If $k+S$ is even, then $\sigma(k)$ is also even. Hence
\[
-C_{\cdot,\sigma(1)}+C_{\sigma(k),\sigma(1)}=0,
\]
since $C_{\cdot,\sigma(1)}=1$ and $C_{\sigma(k),\sigma(1)}=1$. Finally, if $k+S$ is odd, then $\sigma(k)$ is odd. However, this contradicts the minimality of the chosen linear embedding, since the entries on the last run can be placed on the first run, producing an embedding with fewer runs. Therefore, this case cannot occur.

Consequently, $k+S^c$ is the minimum even number of runs required to embed the circular permutation. Equivalently,
\[
S^c=(\text{minimum even number of runs required to embed }\sigma)-k.
\]
\end{proof}

 \begin{lemma}[\textbf{Circular lift identity}]
     For a permutation $\sigma\in S_k$, we have $S^c(\sigma) + S^c(\sigma^+) = 0$.
 \end{lemma}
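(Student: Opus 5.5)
The plan is to reduce the circular lift identity to a term-by-term antisymmetry of the local cost $C_{xy}$ under the shift $x\mapsto x+1$, $y\mapsto y+1$, the same relation invoked in the proof of the (linear) lift identity. The circular score
\[
S^c(\sigma)=\sum_{i=1}^{k} C_{\sigma(i),\,\sigma((i+1)\bmod k)}
\]
is a sum over the $k$ cyclically consecutive pairs of $\sigma$. It contains no separate initial term. So once $C_{x+1,y+1}=-C_{xy}$ is established for $x\neq y$, summing over the $k$ pairs gives
\[
S^c(\sigma^+)=\sum_{i=1}^{k} C_{\sigma(i)+1,\,\sigma(i+1)+1}=-\sum_{i=1}^{k} C_{\sigma(i),\sigma(i+1)}=-S^c(\sigma),
\]
which is the claim. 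Indices are read mod $k$, with $\sigma(k+1)=\sigma(1)$.

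The key step is verifying $C_{x+1,y+1}=-C_{xy}$ for distinct $x,y$, directly from formula \eqref{cxy}. For distinct symbols $\delta_{xy}=0$, so
\[
C_{xy}=-\tfrac{p_xp_y+1}{2}\,\sign(x-y)\,p_x .
\]
Three facts about the shift are needed:
\begin{itemize}
\item incrementing both symbols flips each parity, so $p_{x+1}=-p_x$ and $p_{y+1}=-p_y$;
\item hence the product is unchanged, $p_{x+1}p_{y+1}=p_xp_y$;
\item the order is preserved, $\sign((x+1)-(y+1))=\sign(x-y)$.
\end{itemize}
Substituting, the only factor that changes is $p_x\mapsto -p_x$, so $C_{x+1,y+1}=-C_{xy}$. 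In a permutation every consecutive pair, including the wrap-around pair $(\sigma(k),\sigma(1))$, consists of distinct entries. The $\delta$ term, which would break the antisymmetry, therefore never occurs, provided $k\ge 2$.

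I expect no serious obstacle; this is simpler than the linear lift identity. In the linear case the initial term $C_{\cdot,x}=\frac{1+p_x}{2}$ is not antisymmetric, since $C_{\cdot,x}+C_{\cdot,x+1}=1$, and that is exactly the source of the constant $1$ in $S(\sigma)+S(\sigma^+)=1$. In the circular score this initial term is replaced by the genuine pair cost $C_{\sigma(k),\sigma(1)}$, which is antisymmetric like the others, so the constant drops to $0$. The one point to state explicitly is that $\sigma^+$ is a sequence over the alphabet $\{2,\dots,k+1\}$, not a permutation of $[k]$. This is harmless: $C_{xy}$ is defined for all $x,y\in\{1,\dots,q\}$ with $q=k+1$, and the derivation uses only parities and relative order. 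As a check, I would verify the identity on a small example such as $\sigma=(3,1,2)$.
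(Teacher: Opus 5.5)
Your proof is correct and is essentially the paper's argument: both rest on the termwise antisymmetry $C_{x+1,y+1}=-C_{xy}$ for $x\neq y$, summed over the $k$ cyclic pairs, with no separate initial term left over. Your write-up is more careful than the paper's one-line proof in two ways: you use the unreduced shift $x\mapsto x+1$, which matches the definition of $\sigma^+$ (the paper's $(x+1)\bmod k$ would preserve neither order nor parity), and you correctly note that $k\ge 2$ is needed, since for $k=1$ the only pair is $(\sigma(1),\sigma(1))$, $C_{xx}=1$, and the sum is $2$.
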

 \begin{proof}
 Since $C_{(x+1)\bmod k,(y+1)\bmod k} = -C_{x,y}$, we have $S^c(\sigma^+) = -S^c(\sigma)$ which gives us the desired identity.
\end{proof}
\begin{lemma}\label{oddsum}
For odd $k$ and a permutation $\sigma\in S_k$, $S^c(\sigma)\neq 0$.
\end{lemma}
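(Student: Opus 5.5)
The plan is a parity argument: I will show that $S^c(\sigma)\equiv k \pmod 2$ for every $\sigma\in\mathbf{S}_k$. When $k$ is odd this makes $S^c(\sigma)$ odd, and in particular $S^c(\sigma)\neq 0$.

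\textbf{Step 1: parity of each local cost.} Since $\sigma$ is a permutation, every cyclically consecutive pair $(x,y)=(\sigma(i),\sigma((i+1)\bmod k))$ has $x\neq y$, so $\delta_{xy}=0$ (for $k\ge 2$). Formula \eqref{cxy} then gives $C_{xy}= -\frac{p_xp_y+1}{2}\,\sign(x-y)\,p_x$. There are two cases.
\begin{itemize}
\item If $x$ and $y$ have opposite parity, then $p_xp_y=-1$, so $C_{xy}=0$.
\item If they have the same parity, then $\frac{p_xp_y+1}{2}=1$, while $\sign(x-y)=\pm1$ and $p_x=\pm1$, so $C_{xy}=\pm1$.
\end{itemize}
Hence $C_{xy}\equiv 1 \pmod 2$ exactly when $x$ and $y$ have the same parity, and $C_{xy}\equiv 0$ otherwise.

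\textbf{Step 2: reduce to counting same-parity pairs.} Summing over the $k$ cyclic pairs in the definition of $S^c$ gives
\[
S^c(\sigma)\equiv s \pmod 2,
\]
where $s$ is the number of cyclically adjacent pairs of equal parity. Let $d=k-s$ be the number of cyclically adjacent pairs of opposite parity, i.e.\ the parity changes.

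\textbf{Step 3: the number of parity changes around a cycle is even.} Go once around the cycle $\sigma(1),\dots,\sigma(k),\sigma(1)$, tracking the parity of the current entry. We return to the parity we started with, so the parity flips an even number of times. Equivalently,
\[
\prod_{i=1}^{k} p_{\sigma(i)}\,p_{\sigma((i+1)\bmod k)} \;=\; \prod_{i=1}^{k} p_{\sigma(i)}^2 \;=\; 1,
\]
and each factor equal to $-1$ corresponds to one parity change, so $(-1)^d=1$ and $d$ is even.

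\textbf{Conclusion.} We get $S^c(\sigma)\equiv s = k-d\equiv k \pmod 2$. For odd $k$, $S^c(\sigma)$ is odd and therefore nonzero.

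The only point needing care is Step 1: one must use that $x\neq y$ for every cyclic pair, which holds because $\sigma$ is a permutation, so the $\delta_{xy}$ term never contributes. Step 3 is the key observation, but it is elementary.
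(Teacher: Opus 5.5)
Your proof is correct and is essentially the paper's argument. Both rest on $C_{xy}=0$ for opposite-parity pairs and $C_{xy}=\pm1$ for same-parity pairs, together with the telescoping identity $\prod_{i=1}^{k} p_{\sigma(i)}\,p_{\sigma(i+1)}=1$ (indices mod $k$), which forces an even number of parity changes. You state it as the direct congruence $S^c(\sigma)\equiv k \pmod 2$ rather than arguing by contradiction, which is slightly cleaner and shows that the paper's appeal to the circular lift identity is not needed.
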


\begin{proof}
Suppose, for contradiction, that $S^c(\sigma)=0$. Then, by the circular lift identity, we also have $S^c(\sigma^+)=0$.

The score depends only on consecutive pairs $(x_i,x_{i+1})$ (with indices taken modulo $k$) and on their parities $p_{x_i}\in\{\pm1\}$.  Let $\xi_i = p_{x_i}p_{x_{i+1}}, \qquad 1\le i\le k.$ If a consecutive pair has opposite parity, then $\xi_i=-1$ and $C_{x_i x_{i+1}}=0$.  If the pair has the same parity, then $\xi_i=1$ and $C_{x_i x_{i+1}}=\pm1$.

Since $S^c(\sigma)=0$, the positive and negative contributions to the score must cancel. This forces the number of zero-cost transitions (i.e., those with $p_{x_i}p_{x_{(i+1)\bmod k}}=-1)$) to be odd. Consequently,
\[
\prod_{i=1}^k p_{x_i}p_{x_{(i+1)\bmod k}} = -1.
\]
On the other hand, by rearranging terms, we can write
\[
\prod_{i=1}^k \xi_i
   = \prod_{i=1}^k p_{x_i}p_{x_{i+1 \bmod k}}
   = \prod_{i=1}^k p_{x_i}^2
   = 1,
\]
since $p_{x_i}^2=1$ for all $i$. This is a contradiction.
\end{proof}

When $k$ is even, a permutation $\sigma\in S_k$, having an alternating parity sequence $(\text{even}, \text{odd}, \text{even}, \text{odd}, \dots)$, yields $S^c(\sigma) = 0$. 

\begin{theorem}\label{exactsubseq}
For $k$ odd, every permutation $\sigma$ of length $k$, or $\sigma^+$ occurs as an exact subsequence of $\mathrm{zz}(k-1,k+1)$, up to cyclic permutation.
\end{theorem}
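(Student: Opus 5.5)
The plan is to combine the circular score theorem with the circular lift identity and Lemma~\ref{oddsum}, in the same way Theorem~\ref{embed-subseq} was obtained from the linear lift identity. Fix $\sigma\in\mathbf{S}_k$ with $k$ odd. By the circular lift identity, $S^c(\sigma)+S^c(\sigma^+)=0$. By Lemma~\ref{oddsum}, neither score vanishes. Hence exactly one of $\sigma,\sigma^+$ has strictly negative circular score; call it $\tau$, so that $S^c(\tau)\le -1$.

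Next I will pin down the parity of $S^c$. Each local cost $C_{xy}$ with $x\neq y$ is $0$ when $x,y$ have opposite parity and $\pm1$ when they have the same parity. The number of opposite-parity transitions around a cycle is even, so with $k$ odd the number of same-parity transitions is odd. Therefore $S^c$ is odd, and $k+S^c$ is even, consistent with the theorem that $k+S^c$ is the minimum even number of runs. From $S^c(\tau)\le -1$ we get $k+S^c(\tau)\le k-1$, and both sides are even. So some cyclic rotation of $\tau$ embeds exactly into a zigzag word with at most $k-1$ runs, and this word is a prefix of $\mathrm{zz}(k-1,k+1)$.

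It remains to check that the alphabet width $k+1$ suffices for both cases. If $\tau=\sigma$, every symbol lies in $[k]\subseteq[k+1]$. If $\tau=\sigma^+$, every symbol lies in $\{2,\dots,k+1\}$. The runs of $\mathrm{zz}(m,k+1)$ contain every symbol of the correct parity in $[k+1]$, since $k+1$ is even and so the odd runs are $1,3,\dots,k$ and the even runs are $k+1,k-1,\dots,2$. Hence the local costs computed by the formula for $C_{xy}$ are realized inside $\mathrm{zz}(k-1,k+1)$.

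The main obstacle is making sure the embedding realizing $k+S^c$ runs starts on $R_1$ (or on the run matching the first symbol's parity), so that it genuinely fits within the first $k-1$ runs. The circular score theorem chooses the rotation minimizing the linear score, and the evenness of the run count is what allows the wrap-around cost $C_{\sigma(k),\sigma(1)}$ to replace $C_{\cdot,\sigma(1)}$. I expect to handle this by the same case split as in that theorem, according to the parity of the chosen first symbol. Where an even first symbol would force a start on $R_2$, I will use the shifted-score Lemma~\ref{shifted_score} to move the embedding one run left, or to trade $\sigma$ for $\sigma^+$ consistently with the lift identity. This guarantees the total stays at most $k-1$ runs.
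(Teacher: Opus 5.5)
Your main line is the paper's own argument. The circular lift identity and Lemma~\ref{oddsum} give $\tau\in\{\sigma,\sigma^+\}$ with $S^c(\tau)\le -1$, and the circular score theorem turns this into an embedding of a rotation of $\tau$ in at most $k-1$ runs. Your parity and alphabet checks are correct additions. The paper's proof jumps straight to ``$S(\sigma)\le -1$ or $S(\sigma^+)\le -1$''. So the circular-to-linear step you worry about in your last paragraph is exactly what it leaves implicit.

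The remedy you sketch for that step would not work.
\begin{itemize}
\item An embedding whose first symbol is even cannot be moved one run left, because $R_1$ contains no even symbols.
\item Trading $\tau$ for the other element of $\{\sigma,\sigma^+\}$ flips the sign of $S^c$, which destroys $S^c\le -1$.
\item Starting on $R_2$ is not a problem in itself, since $C_{\cdot,\tau_1}=1$ already counts the empty $R_1$.
\end{itemize}
What matters is the identity $S(\tau)=S^c(\tau)+C_{\cdot,\tau_1}-C_{\tau_k,\tau_1}$ for a rotation $\tau=(\tau_1,\dots,\tau_k)$. It is the freedom to choose where to cut the cycle, not the evenness of the run count, that lets the wrap-around cost replace the initial cost.

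Rotate $\tau$ so that it begins with its smallest odd entry $b$ ($b=1$ for $\sigma$, $b=3$ for $\sigma^+$). Its cyclic predecessor $a$ is either even, so $C_{a,b}=0$, or an odd entry larger than $b$, so $C_{a,b}=1$. Either way $C_{\cdot,b}=0\le C_{a,b}$, hence $S(\tau)\le S^c(\tau)\le -1$. The greedy embedding then starts on $R_1$ and uses $k+S(\tau)\le k-1$ runs. With this in place of your last paragraph the proof is complete, and it no longer needs the circular score theorem at all.
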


\begin{proof}
Using the circular lifting identity and the fact that $S^c(\sigma)\neq 0$ for odd $k$ we have,
\[
S(\sigma)\le -1 \quad \text{or} \quad S(\sigma^+)\le -1.
\]
Therefore, either $\sigma$ or $\sigma^+$ can be placed using at most $k-1$ runs, which implies that it occurs as a subsequence of $\mathrm{zz}(k-1,k+1)$.
\end{proof}
\subsubsection{Even $k$ case:}

When $k$ is even, consider $zz(k-1,k+1)$. The $C_{\cdot,x}$ argument no longer holds because the zigzag pattern is not symmetric (that is, the number of runs is odd). Therefore in this case we prove a weaker theorem than Theorem \ref{exactsubseq}.\\

\begin{theorem}
     For $k$ even, every pattern of length $k$ occurs as an order isomorphic circular subsequence of $zz(k-1,k+1)$. 
\end{theorem}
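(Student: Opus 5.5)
The plan is to imitate the proof of Theorem~\ref{thm:circ_length}, with the rotation chosen so that the maximum comes \emph{last}. We then combine this with the linear result Theorem~\ref{embed-subseq} (Proposition~10 of Engen and Vatter) applied to length $k-1$. The key point is that when $k$ is even, the final run of $\mathrm{zz}(k-1,k+1)$ ends in the largest symbol $k+1$. That symbol can serve as the circular maximum of the pattern.

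\textbf{Step 1 (reduction).} Let $\sigma\in\mathbf{S}_k$. Choose the rotation $\rho_r(\sigma)=(\tau_1,\dots,\tau_{k-1},k)$ whose last entry is $k$. Then $\tau=(\tau_1,\dots,\tau_{k-1})\in\mathbf{S}_{k-1}$. By Theorem~\ref{embed-subseq} applied to $k-1$, either $\tau$ or $\tau^+$ occurs as an exact subsequence of $\mathrm{zz}(k-1,k)$. In both cases every entry used is at most $k$.

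\textbf{Step 2 (comparing the words).} We show that $\mathrm{zz}(k-1,k)$ is a subsequence of $\mathrm{zz}(k-1,k+1)$ that avoids the final letter. Since $k$ is even, the odd runs of $\mathrm{zz}(k-1,k)$ are $1,3,\dots,k-1$ and the even runs are $k,k-2,\dots,2$. In $\mathrm{zz}(k-1,k+1)$ the even runs are identical, and the odd runs are $1,3,\dots,k-1,k+1$. So deleting every occurrence of $k+1$ from $\mathrm{zz}(k-1,k+1)$ gives exactly $\mathrm{zz}(k-1,k)$. Since $k-1$ is odd, the last run $R_{k-1}$ is an odd run, and its final letter is $k+1$. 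Hence an exact embedding of $\tau$ or $\tau^+$ in $\mathrm{zz}(k-1,k)$ transfers to an embedding in $\mathrm{zz}(k-1,k+1)$ whose positions all precede this final $k+1$.

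\textbf{Step 3 (conclusion).} Append the final letter $k+1$ to the embedding from Step 2. This gives a subsequence $(\tau_1',\dots,\tau_{k-1}',k+1)$, where $\tau'\in\{\tau,\tau^+\}$ is order-isomorphic to $\tau$ and all $\tau_i'<k+1$. It is therefore order-isomorphic to $(\tau_1,\dots,\tau_{k-1},k)=\rho_r(\sigma)$. Thus $\mathrm{zz}(k-1,k+1)$ contains $\sigma$ as an order-isomorphic circular subsequence. The conclusion is only order-isomorphic, not exact, because $k$ has been realized by $k+1$ and $\tau$ possibly by its lift $\tau^+$.

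The step needing the most care is Step 2. There one must check the run structure precisely: the parity of $k$ makes the even runs coincide in the two words, and it makes the last run odd, so that it terminates in $k+1$. If $k$ were odd, $R_{k-1}$ would be even and end in $2$, and the argument would break. This explains why the even case is handled separately from Theorem~\ref{exactsubseq}.
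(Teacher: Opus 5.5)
Your proof is correct and follows the paper's argument. You rotate $\sigma$ so that $k$ comes last, send $k$ to the terminal letter $k+1$ of $\mathrm{zz}(k-1,k+1)$, and embed the remaining $\tau$ or $\tau^+$ in $\mathrm{zz}(k-1,k)$, which sits inside $\mathrm{zz}(k-1,k+1)$ before that final letter.

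The only difference is that you invoke the linear Engen--Vatter result instead of the paper's circular Theorem~\ref{exactsubseq}, and yours is the right citation. The circular theorem only embeds some rotation of $\tau$, and a rotation of $\tau$ followed by $k$ is in general not a rotation of $\sigma$. So your version closes a gap in the paper's write-up, as does your explicit check that deleting every $k+1$ from $\mathrm{zz}(k-1,k+1)$ leaves exactly $\mathrm{zz}(k-1,k)$ and that the word ends in $k+1$.
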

\begin{proof}
Given any $\sigma \in \mathbf{S}_k$, consider its rotation $\rho_r(\sigma)$ such that the last element is $k$. Identify $k$ with the last element of $zz(k-1,k+1)$, that is $k+1$. \par
Let $\tau = \rho_r(\sigma)|_{1:k-1}$ (the first $k-1$ elements of rotated permutation). Then $\tau \in \mathbf{S}_{k-1}$. From the Theorem \ref{exactsubseq}, either $\tau$ or $\tau^+$ occurs as an exact subsequence of $zz(k-1,k)$ up to cyclic permutation. But, $zz(k-1,k)$ is a subsequence of $zz(k-1,k+1)$. Therefore any $\sigma$ has a rotation such that it occurs as a order isomorphic subsequence of $zz(k-1,k+1)$.
\end{proof}

\section{A construction for odd circular superpattern}

Having established the score function, we now construct a circular superpattern. The proof follows the philosophy of Engen and Vatter but requires new arguments to handle the modified terminal portion of the zigzag word.

In this section, we give a construction to create an odd circular superpattern. We first recall the method (called \emph{breaking ties}) given by Engen and Vatter \cite{engen2020containing} to obtain a permutation from the corresponding zigzag word (This procedure is similar to level order traversal on trees, here its instead on the zigzag words).

For ease of notation, let $\omega = zz(m,n)$. The procedure of breaking ties is as follows:


\begin{itemize}
    \item We first view the zigzag word~$\omega$ in the $(X,Y)$-plane, as illustrated in Figures~\ref{zz(4,4)} and~\ref{zz(5,5)}. 
    In this representation, each element $\omega_i$ corresponds to the point $(i, \omega_i)$.
    
    \item We refer to the $Y$-coordinates as \emph{levels}. 
    Starting from the rightmost element in the lowest level, we traverse each level from right to left. 
    If the first element encountered in this traversal is $(s, \omega_s)$, we assign $\zeta_s = 1$.
    
    \item More generally, if the $j$-th element visited in this traversal is $(g, \omega_g)$, then we assign $\zeta_g = j$.
    
    \item Once all elements at one level are processed, we proceed to the next higher level and repeat the traversal until all levels have been exhausted.
\end{itemize}

This procedure yields the desired permutation~$\zeta$. It is illustrated for $zz(3,3)$ in Figure \ref{wtoperm}. 

\begin{figure}[h!]

\begin{center}
\begin{tikzpicture}
    \def\seq{1,3,2,1,3}
    \foreach [count=\i] \y in \seq {
        \node at (\i,\y) {\y};
    }
        \foreach [count=\i] \val in \seq {
        \node[below=8pt] at (\i,0) {\val};
    }

    \draw[-, dashed] (0.5,0) -- (5.5,0);
      \node[left=6pt] at (0.5,-0.6) {$\omega$};
\end{tikzpicture}
\hspace{1cm}
\raisebox{6em}{$\Longrightarrow$} 
\hspace{1cm}
\begin{tikzpicture}
    \def\seq{1,3,2,1,3}
    \def\seqtwo{2,5,3,1,4}
    \newcommand{\getfromseqtwo}[1]{%
        \pgfmathparse{{2,5,3,1,4}[#1-1]}%
        \pgfmathresult%
    }
    \foreach [count=\i] \y in \seq {
        \node at (\i,\y) {\getfromseqtwo{\i}};
    }
    \foreach [count=\i] \val in \seqtwo {
        \node[below=8pt] at (\i,0) {\val};
    }

    \draw[-, dashed] (0.5,0) -- (5.5,0);
     \node[left=6pt] at (0.5,-0.6) {$\zeta$};
\end{tikzpicture}
\end{center}

    \caption{$zz(3,3)$ and corresponding $\zeta$}
    \label{wtoperm}
\end{figure}
Engen and Vatter proved that for odd $n$, $\zeta$ is a superpattern. For even $n$, the permutation $\zeta$ obtained from $zz(n,n)$ appended with 1, is a $k$-superpattern. 

\subsection{An odd circular superpattern}\label{odd}
We will now use the method of \emph{breaking ties} to construct a circular superpattern. Let $k>3$ be an odd number and consider the zigzag word $\omega = (zz(k-1,k-1),1)$. The last two elements of $\omega$ are $(2,1)$. Let $\mathrm{zzc}(k) = (zz(k-1,k-1)|_{1:L-2}, k,k-1)$ by replacing the terminal $(2,1)$ of $\omega$ by $(k,k-1)$.  Let $\gamma_k$ be the permutation obtained by $\mathrm{zzc}(k)$ following the procedure of \emph{breaking ties}. 
\begin{eg}
    When $k = 5$, we have \[
\omega = 1\,3\,4\,2\,1\,3\,4\,2\,1,
\qquad\text{ and }\qquad
\mathrm{zzc}(5) = 1\,3\,4\,2\,1\,3\,4\,5\,4.
\]
This results into the permutation, $\gamma_5 = (2,5,8,3,1,4,7,9,6)$. It is experimentally verified that $\gamma_5$ is a 5-circular superpattern. This is illustrated in Figure \ref{tiebreakcirc}  
\end{eg}

\begin{figure}[h!]
\centering

\begin{tabular}{c}
\begin{tikzpicture}
    \def\seq{1,3,4,2,1,3,4,5,4}
    \foreach[count=\i] \y in \seq{
        \node at (\i,\y) {\y};
    }
    \foreach[count=\i] \val in \seq{
        \node[below=8pt] at (\i,0) {\val};
    }
    \draw[dashed] (0.5,0)--(9.5,0);
    \node[left] at (0.5,-0.5) {$\text{zzc}(5)$};
\end{tikzpicture}
\\[1em]
$\Downarrow$

{\small Breaking ties}
\\[1em]

\begin{tikzpicture}
    \def\seq{1,3,4,2,1,3,4,5,4}
    \def\perm{2,5,8,3,1,4,7,9,6}

    \foreach[count=\i] \y in \seq{
        \pgfmathparse{{2,5,8,3,1,4,7,9,6}[\i-1]}
        \node at (\i,\y) {\pgfmathresult};
    }

    \foreach[count=\i] \v in \perm{
        \node[below=8pt] at (\i,0) {\v};
    }

    \draw[dashed] (0.5,0)--(9.5,0);
    \node[left] at (0.5,-0.5) {$\zeta$};
\end{tikzpicture}
\end{tabular}

\caption{$\text{zzc}(5)$ and the corresponding circular superpattern candidate permutation $\zeta$ obtained by breaking ties.}
\label{tiebreakcirc}
\end{figure}

 Before proving the main result, we introduce the notions of \emph{distant inverse-descent} and \emph{layered permutations}, which play a central role in the proof.

\begin{definition}\cite[Distant inverse-descent]{engen2020containing}
    We say that two entries $\pi(j)$ and $\pi(k)$ of the permutation $\pi$ form an \emph{inverse-descent} if $j<k$ and $\pi(j)=\pi(k)+1$. If $\pi(j)$ and $\pi(k)$ form an inverse-descent and are not adjacent in $\pi$ (that is $k\geq j+2$), then they form a \emph{distant inverse-descent.}
\end{definition}
\begin{eg}
    The pair $\pi(3)$ and $\pi(5)$ form a distant inverse-descent in $\pi=(264513).$
\end{eg}

\begin{definition}\cite[Layered permutation]{engen2020containing}
    The sum of two permutations, $\pi$ of length $m$ and $\sigma$ of length $n$ is the permutation of length $m+n$ defined as, $$(\pi\oplus\sigma)(i)=\begin{cases}
        \pi(i) &\text{if }1\leq i\leq m\\
        \sigma(i-m)+m&\text{if }m+1\leq i\leq m+n.
    \end{cases}$$
    A permutation is said to be \emph{layered} if it can be expressed as a sum of decreasing permutations. The decreasing shifted components are called the layers.
\end{definition}
\begin{eg}
    The permutation $(21365487)$ is a layered permutation with layers $21, 3, 654,87$. The permutation $(264513)$ is not layered.
\end{eg}
It is known \cite[Proposition 12]{engen2020containing} that a permutation is layered if and only if it does not have a distant inverse-descent.

Now we state the main theorem of the current section,

\begin{theorem}\label{zzc_odd}
Let $k>3$ be odd and let $\gamma_k$ be the permutation obtained from the word $zzc(k)$ by the procedure of breaking ties, described above. 
Then $\gamma_k$ is a circular $k$-superpattern.
\end{theorem}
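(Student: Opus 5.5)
The proof follows the Engen--Vatter dichotomy. By \cite[Proposition 12]{engen2020containing}, every $\sigma\in\mathbf{S}_k$ is either layered or has a distant inverse-descent. Since circular containment allows any rotation, it suffices to show that for each $\sigma$ some rotation $\rho_r(\sigma)$ embeds, order-isomorphically, in $\gamma_k$. Throughout, the tool is the tie-breaking lemma implicit in Engen and Vatter's construction. Suppose a pattern $\tau$ occurs as an exact subsequence of the word $\mathrm{zzc}(k)$, with repeated letters allowed, and each pair of equal letters in $\tau$ occurs in decreasing order. Then $\tau$ with ties broken in the same way is contained in $\gamma_k$. This is because breaking ties orders the letters by level and, within a level, from right to left. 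So equal letters become a descending run, and distinct levels keep their relative order.

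\textbf{Step 1 (patterns with a distant inverse-descent).} Let $\pi(j)=\pi(l)+1$ with $l\ge j+2$. Merge these two entries into one symbol, obtaining $\tau\in\mathbf{S}_{k-1}$ together with a marked entry that must be used twice, i.e.\ a word of length $k$ with one repeated letter. Rotate first so that the pattern is well placed relative to the modified terminal portion $(k,k-1)$. Then apply the circular score machinery to $\tau$. Since $k-1$ is even, Theorem~\ref{exactsubseq} is not directly available, so I would instead use the linear score with Lemma~\ref{score}, the lift identity, and the shift identities of Lemma~\ref{shifted_score}. The goal is that either $\tau$ or $\tau^+$ embeds into $zz(k-1,k-1)$ in at most $k-2$ runs, leaving one spare run. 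In that spare run the repeated letter can be placed a second time. Because the same letter of the same parity reappears two runs later, the distance $l\ge j+2$ is exactly what makes the duplicate fit at cost consistent with $C_{xx}=1$. The tie-breaking lemma then turns the two copies into consecutive values $\pi(l)<\pi(j)$, as required.

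\textbf{Step 2 (layered patterns).} A layered $\sigma$ has at most $k$ layers. I would rotate $\sigma$ so that it ends with its largest layer, or with the layer containing $k$. A layered permutation $\sigma=\delta_1\oplus\cdots\oplus\delta_t$ then embeds using one letter per layer, climbing the zigzag levels. Each decreasing layer of size $s$ is realized by $s$ copies of a single letter, which tie-breaking makes decreasing. Copies of a letter are spaced two runs apart, so this uses roughly $2s$ runs per layer, and the total must be checked against the $k-1$ runs available. Here the circular freedom should help: it lets us put the top layer, or a singleton formed by $k$, onto the terminal symbols $k,k-1$ of $\mathrm{zzc}(k)$, which sit above all other levels. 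This mirrors the proof of Theorem~\ref{thm:circ_length}, where the maximum is placed at the end.

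\textbf{Main obstacle.} I expect the hardest step to be the interaction with the modified tail: replacing $(2,1)$ by $(k,k-1)$ destroys the final partial run. Each embedding from Step~1 that used that run must therefore be shifted. I would first apply Lemma~\ref{shifted_score} together with the appendix shift identities, choosing between $\tau$ and $\tau^+$ and between the two shift directions so that the last run is unused. The parity argument of Lemma~\ref{oddsum} is what guarantees strict inequality, and so the slack of one run, in the odd case. Only if this fails would I treat the remaining small-score cases by hand, verifying that a rotation moving the problematic last entry to the front yields a score of $-1$.
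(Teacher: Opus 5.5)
Your use of the layered/distant-inverse-descent dichotomy and your tie-breaking lemma are fine. Neither main step, however, produces an embedding.

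In Step~1 the count fails. For $\tau\in\mathbf{S}_{k-1}$, the linear lift identity $S(\tau)+S(\tau^+)=1$ only gives $\min\{S(\tau),S(\tau^+)\}\le 0$, i.e.\ $k-1$ runs, not $k-2$. For example, the identity of length $k-1$ needs $k-1$ runs and its lift needs $k$. Lemma~\ref{oddsum} cannot supply the extra run: it concerns the \emph{circular} score of a cycle of \emph{odd} length, while $\tau$ has even length $k-1$ and you are using linear scores. If you apply the parity argument to the length-$k$ word with the repeated letter instead, the resulting embedding of that word or its lift lives in a zigzag of width $k$. But $\mathrm{zzc}(k)$ contains the letter $k$ only once, in $L$. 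Moreover, a ``spare run'' cannot host the second copy of the merged letter. That copy must sit at position $l$, between the images of $\pi(l-1)$ and $\pi(l+1)$, so its cost is fixed by its neighbours. $C_{xx}$ never enters, because $l\ge j+2$ excludes immediate repetition.

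In Step~2, realizing a layer of size $s$ by $s$ copies of one letter costs $2s-1$ runs. Take your preferred rotation $(k-2,\dots,1,k,k-1)$ of the decreasing permutation, with $(k,k-1)$ placed on $L$. The layer $(k-2,\dots,1)$ would still need $2k-5>k-1$ runs. Layers must instead use distinct consecutive values, each step $x\to x-1$ costing $0$, as in Lemma~\ref{layered2}: $k-2\mapsto(k-2,R_1),\dots,1\mapsto(1,R_{k-2})$.

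The missing idea is the one you mention only in passing. Normalize the rotation so that $\sigma(k)=k$, and send $k\mapsto(k,L)$, the unique top-level entry, which lies after all runs. What remains is the prefix $\sigma(1:k-1)\in\mathbf{S}_{k-1}$, a \emph{linear} problem. Engen and Vatter's $(k-1)$-superpattern, the tie-broken word $zz(k-1,k-1)\,1$, already solves it. That word coincides with $\mathrm{zzc}(k)$ except for its terminal $(2,1)$, so no new score inequality is needed. The only work is repairing the Engen--Vatter embeddings that touch that terminal pair. The paper does this explicitly:
\begin{itemize}
\item layered prefixes with $\sigma(k-1)\in\{1,2,3\}$ in Lemmas~\ref{layered2} and~\ref{layerd3};
\item prefixes with a distant inverse-descent and $\sigma(k-1)=2$ in Lemma~\ref{2boundaryDID}, using the shift identities.
\end{itemize}
Your final paragraph defers exactly these cases (``only if this fails\dots''), so as written the proposal does not reach the theorem.
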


In order to prove the theorem, first we establish several technical Lemmas.

\subsection{Embedding framework and terminology}

Before proving the lemmas, we first clarify the objects involved and the terminology used throughout the remainder of this section. Figure~\ref{fig:embedding-philosophy} summarizes the overall workflow.

Our objective is to show that the permutation $\gamma_k$, obtained from the zigzag word $zzc(k)$ by the breaking-ties procedure, is a circular $k$-superpattern. Therefore, given any permutation $\sigma\in S_k$, we must identify a cyclic rotation of $\sigma$ that occurs as an order-isomorphic subsequence of $\gamma_k$.

Since the breaking-ties procedure is fixed, it is sufficient to work with the underlying zigzag word $zzc(k)$. Thus, instead of directly constructing an occurrence in $\gamma_k$, we identify a suitable cyclic rotation of $\sigma$ and embed it into $zzc(k)$ in a manner consistent with the breaking-ties procedure. The corresponding order-isomorphic subsequence of $\gamma_k$ then follows automatically.

Consequently, each of the subsequent lemmas consists of two conceptual steps:
\begin{enumerate}
    \item identifying a suitable cyclic rotation of the given permutation, and
    \item constructing an embedding of that rotation into the underlying zigzag word $zzc(k)$.
\end{enumerate}

Once these two steps are completed, the desired occurrence in $\gamma_k$ is immediate.

Therefore, throughout the proofs of Lemmas~5.7--5.10, our objective is only to construct suitable embeddings into $zzc(k)$. The passage from $zzc(k)$ to $\gamma_k$ is automatic and will not be mentioned further.

\paragraph{Embedding.}
We identify the zigzag word $zzc(k)$ as consisting of the runs
$R_1,R_2,\ldots,R_{k-1}$,
where the final decreasing run $R_{k-1}$ consists of
$k-1,k-3,\ldots,4$.
This is followed by the terminal list
$L=(k,k-1)$. Thus $zzc(k) = R_1,R_2, \cdots R_{k-1},L$.

For brevity, we denote by $(a,R_i)$ the occurrence of the symbol $a$ in the
$i$-th run of $zzc(k)$. An assignment
$x\mapsto(a,R_i)$
means that the symbol $x$ of the pattern is mapped to the occurrence $(a,R_i)$
of the underlying zigzag word. Similarly,
$x\mapsto(a,L)$
means that the symbol $x$ is mapped to the occurrence of $a$ in the terminal list $L$.

Once every symbol of the pattern has been assigned to an occurrence of $zzc(k)$, the resulting assignment determines an embedding.

In the sequence of results from Lemma~\ref{layered2} to the proof of Theorem~\ref{zzc_odd}, we shall assume that $k>3$ is odd, $\sigma\in S_k$ and $\mathrm{zzc}(k)$ and $\gamma_k$ are as described at the beginning of Section~\ref{odd}.

\begin{figure}[h]
\centering
\begin{tikzpicture}[
    node distance=1.6cm,
    >=Stealth,
    every node/.style={align=center},
    box/.style={draw, rounded corners, minimum width=4cm, minimum height=0.9cm}
]

\node[box] (sigma) {Pattern $\sigma$};

\node[box, below of=sigma] (rotation)
{Choose a cyclic\\representative};

\node[box, below of=rotation] (embed)
{Embed into the underlying\\zigzag word $zzc(k)$};

\node[box, below of=embed] (ties)
{Breaking ties};

\node[box, below of=ties] (gamma)
{Circular occurrence in\\$\gamma_k$};

\draw[->] (sigma) -- (rotation);
\draw[->] (rotation) -- (embed);
\draw[->] (embed) -- (ties);
\draw[->] (ties) -- (gamma);

\end{tikzpicture}
\caption{Embedding framework used throughout Section~5. Every proof first identifies a suitable cyclic representative of the pattern as a subsequence of the underlying zigzag word $zzc(k)$. The corresponding occurrence in $\gamma_k$ is then obtained automatically by the fixed breaking-ties procedure.}
\label{fig:embedding-philosophy}
\end{figure}

\begin{lemma}\label{layered2}
    If $\sigma(k-1) = 2$ and $\sigma$ is layered, then $\gamma_k$ circularly contains $\sigma$.
\end{lemma}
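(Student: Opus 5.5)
\emph{Step 1: $\sigma$ is a single explicit permutation.} We first use the layered hypothesis to pin $\sigma$ down completely. Write $\sigma=\lambda_1\oplus\lambda_2\oplus\cdots$ with decreasing layers. The entry $2$ must lie in the first layer (if that layer is $21$) or in the second layer (if the first layer is $1$).
\begin{itemize}
\item If the first layer is $21$, then $2$ sits in position $1$. Since $k>3$, position $1$ is not position $k-1$, so this is impossible.
\item Otherwise $\sigma(1)=1$ and the second layer is $(j,j-1,\dots,2)$, occupying positions $2,\dots,j$. Then $2$ sits in position $j$, so $j=k-1$. Only position $k$ remains, so the last layer is the singleton $k$.
\end{itemize}
Hence
\[
\sigma=(1,\;k-1,\;k-2,\;\dots,\;2,\;k),
\]
and the lemma reduces to exhibiting one rotation of this single permutation inside $\gamma_k$.

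\emph{Step 2: reading order from the word.} As agreed in the embedding framework, we work in $\mathrm{zzc}(k)=R_1R_2\cdots R_{k-1}L$ with $L=(k,k-1)$. Breaking ties compares levels first. Among occurrences of the same letter, a later occurrence receives a \emph{smaller} value.

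So an occurrence in $\gamma_k$ is obtained from any choice of occurrences in $\mathrm{zzc}(k)$ that satisfies two conditions. First, the letters must respect the required order. Second, wherever two pattern entries share a letter, the left one must be the larger pattern entry.

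In particular, a decreasing block of $\sigma$ may be realised by any left-to-right sequence of occurrences whose letters are weakly decreasing. This is the key freedom beyond exact subsequences as used in Theorem~\ref{exactsubseq}.

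\emph{Step 3: the embedding.} I would take the rotation $\sigma$ itself, $1,(k-1,\dots,2),k$, and map:
\begin{itemize}
\item $k\mapsto(k,L)$, the unique occurrence of the top letter;
\item $1\mapsto(1,R_1)$, which lies below every letter $\ge 2$;
\item the decreasing block of length $k-2$ to a weakly decreasing chain of occurrences of letters in $\{2,\dots,k-1\}$ inside $R_1\cdots R_{k-1}$, after the chosen $1$.
\end{itemize}

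A natural candidate for the chain is the following.
\begin{enumerate}
\item Take $(k-2,R_1)$.
\item Then take the tail $k-3,\dots,4$ of $R_2$.
\item Then take the repeated letter $3$ in $R_3,R_5,\dots,R_{k-2}$.
\item Then finish with copies of $2$ and/or the letter $4$ of $R_{k-1}$, interleaved so that the letters stay weakly decreasing.
\end{enumerate}
Repeated letters are allowed precisely because of the tie-breaking rule from Step 2.

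If the count for the identity rotation falls one short, I would switch rotations. The fallback is $(k-1,\dots,2,k,1)$ or $(2,k,1,k-1,\dots,3)$, realising the ``$k$'' by a high letter in an early run and the trailing small entries by late occurrences of $1$ or $2$. I would use the fact that the rightmost copy of a letter gets the smallest value among its copies.

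\emph{Main obstacle.} The main obstacle is the counting in Step 3: producing a weakly decreasing chain of length exactly $k-2$ between the chosen images of $1$ and $k$. The naive chain above gives only about $k-3$ entries. I would first try to gain the missing entry by mixing copies of $3$ in odd runs with copies of $2$ in even runs, arranged so that the letter never increases. If that fails, I would exploit the terminal list $L$ through the rotation that places $k$ adjacent to $1$.
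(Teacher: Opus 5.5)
Your proposal has two genuine gaps.

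First, Step~1 misses a case. The entry $2$ lies in the first layer whenever that layer has length at least $2$, not only when it equals $21$. If the first layer is $(j,j-1,\ldots,1)$, then $2$ is in position $j-1$. Taking $j=k$ gives the decreasing permutation $(k,k-1,\ldots,2,1)$, which is layered with $\sigma(k-1)=2$. So there are two permutations to handle, as in the paper. The omitted one is easy: its rotation $(k-2,\ldots,2,1,k,k-1)$ embeds exactly via $k-2-j\mapsto(k-2-j,R_{j+1})$ for $0\le j\le k-3$, together with $k\mapsto(k,L)$ and $k-1\mapsto(k-1,L)$.

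Second, and more seriously, Step~3 never produces an embedding, and none of the rotations you name works for $k\ge 7$. Your tie rule in Step~2 is correct, but the shortfall for the identity rotation is structural, not an artifact of the naive chain.

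\begin{itemize}
\item \emph{Identity rotation.} Since $k$ is last and largest, its letter must strictly exceed all others, so at best $k\mapsto(k,L)$. The block then needs a weakly decreasing chain of $k-2$ letters from $\{2,\ldots,k-1\}$ inside $R_1\cdots R_{k-1}$. Along such a chain, a step stays in the same run only for a strict even-to-even descent, which lowers the letter by at least $2$. A parity change costs one run, and a repeat or an odd-to-odd descent costs two. Balancing this against a total drop of at most $k-3$ shows the chain needs at least $k-1$ runs. Equality holds only if it ends with the letter $2$ in $R_{k-1}$, which is precisely the entry deleted in forming $\mathrm{zzc}(k)$.
\item \emph{First fallback.} The same count shows $\gamma_k$ has no decreasing subsequence of length $k-1$. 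This rules out $(k-1,\ldots,2,k,1)$, which contains $(k-1,\ldots,2,1)$.
\item \emph{Second fallback.} The rotation $(2,k,1,k-1,\ldots,3)$ does work for $k=5$: positions $1,3,5,7,9$ of $\mathrm{zzc}(5)$ give $(2,8,1,7,6)$ in $\gamma_5$. But it fails for $k\ge7$, already in $\gamma_7$. There the block of $k-3$ entries must start in $R_3$ or later. The count then forces either the deleted $(2,R_{k-1})$, or $k-3$ copies of $k-1$, of which only $(k-1)/2$ are available.
\end{itemize}

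The idea you are missing is the paper's rotation $(k-2,\ldots,3,2,k,1,k-1)$. It realises $k$ and $k-1$ by two copies of the letter $k-1$: $k\mapsto(k-1,R_{k-3})$ and $k-1\mapsto(k-1,R_{k-1})$, with the left copy the larger. It sets $1\mapsto(1,R_{k-2})$ and places the block diagonally:
\begin{itemize}
\item $k-1-i\mapsto(k-1-i,R_i)$ for $1\le i\le k-5$;
\item then $3\mapsto(2,R_{k-5})$;
\item then $2\mapsto(1,R_{k-4})$.
\end{itemize}
The paper's list actually omits the image of $3$; taking $(2,R_{k-5})$ supplies it. This construction requires $k\ge7$. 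For $k=5$, the only rotation that occurs in $\gamma_5$ is your $(2,5,1,4,3)$.
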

\begin{proof}
If $\sigma(k-1) = 2$ and $\sigma$ is layered then the only such permutations are the decreasing permutation $(k,k-1,\cdots,2,1)$ and $(1,k-1,k-2,\cdots, 2,k)$.\par
The permutation $(k-2,k-3,\cdots,1,k,k-1)$ is cyclically equivalent to $(k,k-1,\cdots,2,1)$. We embed $(k-2,k-3,\cdots,1,k,k-1)$ as an exact subsequence of $\mathrm{zzc}(k)$ by the following assignment: 
\begin{align*}
    k  &\mapsto (k,L) \\
    k-1 &\mapsto (k-1,L) \\
    k-2 &\mapsto (k-2,R_1) \\
    k-3 &\mapsto (k-3,R_2) \\
    &\vdots \\
    1 &\mapsto (1,R_{k-2}).
\end{align*}
\par
Next consider, $\sigma=(1,k-1,k-2,\cdots, 2,k)$ which is cyclically equivalent to $(k-2,k-3,\cdots,2,k,1,k-1)$. We embed $(k-2,k-3,\cdots,2,k,1,k-1)$ as an order isomorphic subsequence of $\mathrm{zzc}(k)$ by the following assignment: 
\begin{align*}
    1 &\mapsto (1,R_{k-2}) \\
    k-1 &\mapsto (k-1,R_{k-1}) \\
    k-2 &\mapsto (k-2,R_1) \\
    k-3 &\mapsto (k-3,R_2) \\
    &\vdots \\
    4 &\mapsto (4, R_{k-5}) \\
    2 &\mapsto (1 ,R_{k-4}) \\
    k &\mapsto (k-1,R_{k-3}).
\end{align*}

\end{proof}

\begin{lemma}\label{layerd3}
    If $\sigma(k-1) = 3$ and $\sigma$ is layered, then $\gamma_k$ circularly contains $\sigma$.
\end{lemma}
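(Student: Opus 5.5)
The plan is to first list exactly which permutations satisfy the hypothesis, and then, as in Lemma~\ref{layered2}, give an explicit rotation and an explicit run-by-run assignment into $\mathrm{zzc}(k)=R_1,\dots,R_{k-1},L$ for each one.

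\textbf{Step 1: classification.} Let $\sigma$ be layered with $\sigma(k-1)=3$. The layers are decreasing blocks of consecutive values, and their values increase from left to right. Only one entry, $\sigma(k)$, comes after position $k-1$, so there are two cases.
\begin{itemize}
\item If $\sigma(k)$ forms its own layer, then $\sigma(k)=k$. The layer ending at position $k-1$ then has least entry $3$, so it is $k-1,k-2,\dots,3$. The values $1,2$ must form earlier layers, giving $12$ or $21$ at the start. This yields $(1,2,k-1,\dots,3,k)$ and $(2,1,k-1,\dots,3,k)$.
\item If $\sigma(k)$ lies in the same layer as $3$, then $\sigma(k)$ is the next smaller value. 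So $\sigma(k)=2$, since $\sigma(k)=1$ would leave $2$ with no admissible position. The value $1$ must then be a layer on its own at the front. This yields $(1,k,k-1,\dots,3,2)$.
\end{itemize}
So exactly three permutations must be treated. I will verify this list carefully, since the whole lemma rests on it.

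\textbf{Step 2: choose rotations.} For each permutation I will pick the rotation that puts a long decreasing block at the front, because $\mathrm{zzc}(k)$ can absorb a decreasing sequence one entry per run. The terminal list $L=(k,k-1)$ or the top of the last runs can then take the largest values at the end.
\begin{itemize}
\item For $(1,k,k-1,\dots,3,2)$ I will use $(k-1,\dots,3,2,1,k)$, which is a decreasing block followed by a maximum. Its first $k-1$ entries go to $(k-2,R_1),(k-3,R_2),\dots,(1,R_{k-2})$, exactly as in Lemma~\ref{layered2}, after relabelling order-isomorphically. The final $k$ goes to $(k,L)$.
\item For $(2,1,k-1,\dots,3,k)$ I will use the rotation $(k-1,\dots,3,k,2,1)$.
\item For $(1,2,k-1,\dots,3,k)$ I will use the rotation $(k-1,\dots,3,k,1,2)$.
\end{itemize}

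\textbf{Step 3: embed and check.} For the last two rotations, the decreasing block of length $k-3$ is sent down the runs $R_1,\dots,R_{k-3}$. I will try $(k-2,R_1),(k-3,R_2),\dots$ as in Lemma~\ref{layered2}. The entry $k$ goes to a top symbol $(k-1,R_{k-1})$, or to $(k,L)$. The pair $2,1$, or $1,2$, then has to fit in the remaining positions. Since $(k,k-1)$ in $L$ is decreasing, $2,1$ can go to $(k,L),(k-1,L)$ only if the block is re-scaled so that these sit below it. That is not available here, so I will instead look for small symbols in $R_{k-2},R_{k-1}$, or shift the decreasing block one run left and use $R_{k-2},R_{k-1}$ for the tail.

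In each case I will confirm validity by checking two things: the run indices are nondecreasing along the pattern, and the chosen symbols are order-isomorphic to the rotation. This check is where the local costs $C_{xy}$ are useful: summing them as in Lemma~\ref{score} counts the runs used, so the total can be compared against $k-1$ runs plus $L$.

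\textbf{Main obstacle.} I expect the obstacle to be the two cases ending in the small pair $1,2$ or $2,1$ after the maximum $k$. Here parity matters. Odd symbols increase within odd runs and even symbols decrease within even runs, so an ascent $1,2$ costs a run change, while a descent can sometimes be absorbed within one run. If the direct assignment runs out of runs, I will try a different rotation instead: start at the small pair, for instance $(1,2,k-1,\dots,3,k)$ itself, so that $1$ and $2$ use the early odd and even runs and the maximum uses $(k,L)$.
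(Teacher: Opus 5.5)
Your classification in Step~1 is correct. It is also more complete than the paper's list, which records only the two permutations with $\sigma(k)=k$. The third one, $(1,k,k-1,\dots,3,2)$, is a rotation of $(k,k-1,\dots,1)$, so Lemma~\ref{layered2} already covers it.

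\textbf{The gap is in Steps~2--3.} None of the three rotations you commit to occurs in $\gamma_k$, and for the two main cases you never write down an assignment. After breaking ties, a decreasing subsequence of $\gamma_k$ is exactly a non-increasing subsequence of $\mathrm{zzc}(k)$, because equal symbols become decreasing. Now work inside $R_1\cdots R_{k-1}$, where symbols in $R_r$ have the parity of $r$, odd runs increase, and even runs drop by at least $2$. Along a non-increasing subsequence there, the quantity $r-v$ (run index minus symbol) increases by at least $2$ per step and stays in $[3-k,\,k-3]$. A decreasing subsequence that meets $L$ uses only symbols $\ge k-1$, so it has length at most $(k+1)/2$. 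Hence the longest decreasing subsequence of $\gamma_k$ has length $k-2$; for example, $\gamma_5=(2,5,8,3,1,4,7,9,6)$ has none of length $4$.

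This rules out each of your rotations:
\begin{itemize}
\item Both $(k-1,\dots,2,1,k)$ and $(k-1,\dots,3,k,2,1)$ contain a decreasing subsequence of length $k-1$, so neither embeds. This is also why your list $(k-2,R_1),\dots,(1,R_{k-2})$ has only $k-2$ slots for $k-1$ entries.
\item In $(k-1,\dots,3,k,1,2)$, the block followed by the image of $1$ is a decreasing sequence of the maximal length $k-2$. This forces $1\mapsto(1,R_{k-2})$, and $3\mapsto(2,R_{k-3})$ or $3\mapsto(1,R_{k-4})$. The first choice leaves no position for $k$ between them. The second leaves no admissible symbol for $2$.
\item Your fallback of starting at the small pair does work for $(1,2,k-1,\dots,3,k)$, via $(1,R_1),(3,R_1),(k-1,R_2),\dots,(4,R_{k-3}),(4,R_{k-1}),(k,L)$. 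However, $(2,1,k-1,\dots,3,k)$ itself is not contained in $\gamma_k$.
\end{itemize}

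\textbf{The missing ingredient is the tie-breaking rule.} Equal symbols at positions $i<j$ of $\mathrm{zzc}(k)$ become entries with $\zeta_i>\zeta_j$. So one symbol can carry two pattern values, with the earlier one larger. Your criterion that ``the chosen symbols are order-isomorphic to the rotation'' tacitly forbids this, and without it the lemma is out of reach. An exact occurrence of a length-$k$ permutation must use $(k,L)$, so it must end in $k$ or in $k,k-1$. The only rotation of $(2,1,k-1,\dots,3,k)$ of that form is the unrotated one, which we just saw fails.

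The paper instead rotates so that the pattern starts with $3,k$: it uses $(3,k,2,1,k-1,\dots,4)$ and $(3,k,1,2,k-1,\dots,4)$. It assigns
\begin{itemize}
\item $3\mapsto(3,R_1)$ and $k\mapsto(k-1,R_2)$;
\item then $2\mapsto(2,R_2)$, $1\mapsto(1,R_3)$ in the first case, or $1\mapsto(1,R_3)$, $2\mapsto(3,R_3)$ in the second;
\item then $k-1\mapsto(k-1,R_4)$, $k-2\mapsto(k-2,R_5)$, \dots, $4\mapsto(4,R_{k-1})$.
\end{itemize}
Here $k$ and $k-1$ share the symbol $k-1$ in runs $2$ and $4$. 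In the second case, $3$ and $2$ also share the symbol $3$. The terminal list $L$ is not used at all.
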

\begin{proof}
    If $\sigma(k-1) = 3$ and $\sigma$ is layered then there are two such permutations, $(2,1,k-1,k-2,\cdots,3,k)$ and $(1,2,k-1,k-2,\cdots,3,k)$. \par
    If $\sigma=(2,1,k-1,\cdots,3,k)$, then we construct an order-isomorphic embedding by the following assignment:
\begin{align*}
3      &\mapsto (3,R_1),\\
k      &\mapsto (k-1,R_2),\\
2      &\mapsto (2,R_2),\\
1      &\mapsto (1,R_3),\\
k-1    &\mapsto (k-1,R_4),\\
k-2    &\mapsto (k-2,R_5),\\
k-3    &\mapsto (k-3,R_6),\\
&\vdots\\
4      &\mapsto (4,R_{k-1}).
\end{align*}
This assignment defines an order-isomorphic embedding of the chosen cyclic representative into $zzc(k)$. \\

If $\sigma=(1,2,k-1,k-2,k-3,\cdots,3,k)$, then we construct an order-isomorphic embedding by the following assignment:
\begin{align*}
3      &\mapsto (3,R_1),\\
k      &\mapsto (k-1,R_2),\\
1      &\mapsto (1,R_3),\\
2      &\mapsto (3,R_3),\\
k-1    &\mapsto (k-1,R_4),\\
k-2    &\mapsto (k-2,R_5),\\
&\vdots\\
4      &\mapsto (4,R_{k-1}).
\end{align*}
This assignment defines an order-isomorphic embedding into $zzc(k)$.
\end{proof}

We now recall the construction of a word from a distance inverse descent permutations that was given by Engen Vatter \cite{engen2020containing}. We term this as the horizontal cut procedure.

\begin{definition}[Horizontal cut embedding]\label{lem:horizontal-cut}
Let $\sigma$ be a permutation of length $k$ that has a distant inverse
descent, i.e.\ there exist indices $a<b-1$ such that
$\sigma(a)=\sigma(b)+1$.  Define the word $p\in [k-1]^k$ by
\[
p(i)=
\begin{cases}
\sigma(i), & \sigma(i)\le \sigma(b),\\
\sigma(i)-1, & \sigma(i)\ge \sigma(a).
\end{cases}
\]
Then $p$ contains exactly two occurrences of the letter $\sigma(b)$
and these occurrences are not adjacent.  In particular $p$ has no
immediate repetition.

Consequently, by \cite[Proposition 10]{engen2020containing}, either $p$ or $p^{+}$ occurs as a subsequence of the first $k$ runs of the infinite zigzag word.
\end{definition}

Before proving the next lemma, we explain why only one exceptional case requires additional analysis.

The horizontal cut construction of Engen and Vatter \cite{engen2020containing} reduces every permutation with a distant inverse-descent to a word that can be embedded into the ordinary zigzag word. In our construction, however, the only modification to the zigzag word is the replacement of the terminal $(2,1)$ by $(k,k-1)$. Consequently, every embedding that avoids the terminal $(2,1)$ remains valid without modification.

Furthermore, in the horizontal cut construction, if the lifted word $(\sigma')^+$ is embedded, then the terminal symbol involved is $3$, so the modified terminal portion of $zzc(k)$ is never used. Therefore, the only genuinely new situation occurs when the horizontal cut construction embeds $\sigma'$ itself and the original permutation satisfies $\sigma(k-1)=2$. The following lemma resolves precisely this remaining boundary case.

\begin{lemma}\label{h_cut_max}
    If the horizontal cut is performed on a permutation $\pi'$ to obtain $\pi$ and the last element of the permutation $\pi$ of length $k-1$ and $\pi(k-1) =2$ utilizing $R_{k-1}$ in the embedding, then the maximum element of $\pi$ is $k-2$ with multiplicity one and second maximum is $k-3$.  
\end{lemma}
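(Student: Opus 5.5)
We interpret the statement in the setting just described. The horizontal cut turns $\pi'\in S_k$ with a distant inverse-descent into a word $\pi$ of length $k$ over $[k-1]$, in which the letter $c=\pi'(b)$ occurs twice, non-adjacently. We assume that $\pi$ itself (not its lift) is embedded greedily into the zigzag word, that its final letter is $2$, and that this letter is placed in $R_{k-1}$. The plan is to read off information about $\pi$ from the score function of Lemma~\ref{score}. Since $\pi$ uses exactly $k-1$ runs, its score is $S(\pi)=-1$.

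\textbf{Step 1: every transition is cheap.} Because the last letter $2$ is even and sits in $R_{k-1}$, which is an even run, the embedding ends at run $k-1$. Write the score as $C_{\cdot,\pi(1)}+\sum_i C_{\pi(i)\pi(i+1)}$. Each term is at least $-1$, and there are $k-1$ transitions. For the total to equal $-1$ (total runs $k-1$), we need $k-1$ runs to accommodate $k$ letters. Only same-parity transitions in the run direction have cost $-1$. So I will argue that almost all transitions must cost $-1$ or $0$. In particular, any transition of cost $+1$ forces enough cost $-1$ transitions to compensate, and these can only occur inside monotone same-parity stretches.

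\textbf{Step 2: locate the maximum.} The alphabet of $\pi$ is $[k-1]$, and its maximum value is $k-1$ unless the cut removed the top. We then look at where the largest letters can sit. In $\mathrm{zzc}(k)$, the runs $R_1,\dots,R_{k-1}$ over alphabet $[k-1]$ have odd runs ending at $k-2$ and even runs starting at $k-1$. The letter $2$ is last, and it is reached in $R_{k-1}$ via the run's descent $k-1,k-3,\dots,4,2$. I will show that if the letter $k-1$ were present, placing it would force an extra run, pushing the final $2$ beyond $R_{k-1}$. Concretely, the parity constraint on $k-1$ together with the required ending on $2$ gives $S(\pi)\ge 0$, a contradiction. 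Hence the maximum of $\pi$ is $k-2$.

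\textbf{Step 3: multiplicity and second maximum.} The doubled letter of $\pi$ is $c$. If $c=k-2$, then $k-2$ must appear twice, each time requiring its own odd run. The $C_{xx}$-type and parity costs then give $S\ge 0$ by a count similar to the one in Lemma~\ref{oddsum}, which is impossible; so $k-2$ has multiplicity one. Since the word has $k$ letters over at most $k-2$ distinct values with exactly one repeat, all of $1,\dots,k-2$ occur. In particular $k-3$ occurs, which gives the second maximum.

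\textbf{Main obstacle.} I expect the difficulty to lie in Steps 2 and 3. The score bookkeeping must rule out configurations using $k-1$ or a doubled $k-2$, and greedy placement must interact correctly with the fixed endpoint $(2,R_{k-1})$. To handle this, I will first count how many cost $-1$ transitions are available given the parity pattern, and then compare that count with the number of runs the top letters force.
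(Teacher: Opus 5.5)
Your proposal has a genuine gap. It starts from a misreading of the lengths, and under your own reading Step~2 is false. In the statement, and in Lemma~\ref{2boundaryDID} where it is used, the cut is applied to $\pi'=\sigma(1:k-1)\in S_{k-1}$ with $\sigma(k)=k$. So $\pi$ is a word of length $k-1$ over $[k-2]$, and the relevant score is $S(\pi)=0$, not $-1$.

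With your choice $\pi'\in S_k$, the cut always sends the top value $k$ to $k-1$, since $\pi'(a)\le k$. So $k-1$ is always present, and the case ``unless the cut removed the top'' never arises. The configuration you want to exclude also actually occurs. For $k=5$, $\pi'=(2,4,5,1,3)$ cuts to $\pi=(1,3,4,1,2)$. This embeds in $\mathrm{zz}(4,4)$ as $1,3\in R_1$, $4\in R_2$, $1\in R_3$, $2\in R_4$, with $S(\pi)=-1$. Hence no parity count can give $S(\pi)\ge 0$ from the presence of $k-1$. Step~3 is also internally inconsistent: a word of length $k$ with exactly one repeated letter has $k-1$ distinct letters, and these cannot all lie in $[k-2]$.

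Under the intended reading, ``the maximum is $k-2$'' is automatic, because the cut sends $k-1$ to $k-2$. ``The second maximum is $k-3$'' then follows from surjectivity of the cut onto $[k-2]$ once $k-2$ is known to be simple. So the entire content is that the cut is not made at the top pair $(k-1,k-2)$.

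The paper does not use a score count for this. It argues that a doubled $k-2$ forces $(k-1,k-2)$ to be the only distant inverse-descent, since otherwise one would cut at another one. It then asserts that such a word cannot end with $2$ in $R_{k-1}$. Your proposal has no counterpart of this ``cut elsewhere'' step.

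More importantly, the inequality you hope for in Step~3 fails even after that reduction. Take $\sigma=(k-1,1,k-2,k-3,\dots,2,k)$; its only distant inverse-descent is $(k-1,k-2)$. The cut gives $\pi=(k-2,1,k-2,k-3,\dots,2)$. The greedy embedding places $k-2$ in $R_1$, then $1$ and $k-2$ in $R_3$, then $k-3,\dots,2$ in $R_4,\dots,R_{k-1}$. So $S(\pi)=0$ and $S(\pi^+)=1$, which means the unlifted word is the one embedded, with its final $2$ in $R_{k-1}$ of $\mathrm{zz}(k-1,k-1)$. Yet $k-2$ has multiplicity two here. For $k=5$ this is $\sigma=(4,1,3,2,5)$ and $\pi=(3,1,3,2)$.

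So Step~3 cannot be completed by any argument. The statement as written fails, and so does the last sentence of the paper's proof. This boundary case needs separate treatment inside Lemma~\ref{2boundaryDID} rather than an appeal to this lemma.
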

\begin{proof}
    For contradiction assume $k-2$ occurs twice, that is the only distance inverse occurs with respect to $k,k-1$. Then such permutation will always have the form $k , 1,2,3,4, \cdots m , k-1,m+1, \cdots k-2$. Then it cannot have 2 in the last run. Thus giving the contradiction.  
\end{proof}

\begin{lemma}\label{2boundaryDID}
If $\sigma(k-1)=2$ and $\sigma$ has distance inverse descent, then $\gamma_k$ circularly contains $\sigma$.
\end{lemma}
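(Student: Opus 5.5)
Since $\sigma$ has a distant inverse-descent, I will apply the horizontal cut of Definition~\ref{lem:horizontal-cut}. This gives a word $p\in[k-1]^k$ with exactly one repeated letter, and the two copies are not adjacent. By \cite[Proposition 10]{engen2020containing}, either $p$ or $p^+$ embeds in the first $k$ runs of the infinite zigzag word. The point of the proof is to move this embedding into $\mathrm{zzc}(k)$, whose runs are only $R_1,\dots,R_{k-1}$ followed by the terminal list $L=(k,k-1)$. After the embedding, the two copies of the repeated letter must be placed so that breaking ties recovers $\sigma$. As in Engen--Vatter, this means the earlier copy (which plays $\sigma(a)$) must sit at the same level as the later copy and to its left, and that is exactly how the right-to-left tie-breaking orders them.

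The first step is to reduce to a single bad case. If the embedding of $p$ or $p^+$ uses only $R_1,\dots,R_{k-1}$, and never the occurrence of $2$ or $1$ in a $k$-th run, then it already lies in $\mathrm{zzc}(k)$ and we are done. As argued before Lemma~\ref{h_cut_max}, the lifted word $p^+$ ends at level $3$ and never needs the replaced terminal pair. So the remaining case is that $p$ itself is embedded and its last letter is forced into the $k$-th run. Since $\sigma(k-1)=2$ we have $p(k-1)=2$ (because $2\le\sigma(b)$, or else $2\ge\sigma(a)$ and then $\sigma(a)=2$, $\sigma(b)=1$). So the last letters are $p(k-1)=2$ followed by $p(k)$, and the embedding spills past $R_{k-1}$.

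In this case I use the freedom to rotate. I will choose the rotation that moves the final block $\sigma(k-1)\sigma(k)$, or just $\sigma(k)$, to the front, so the pattern now begins with the small value $2$. By Lemma~\ref{h_cut_max}, the cut word has a unique maximum $k-2$, with $k-3$ as the second largest value. The plan is to send the top one or two values to the terminal list $L=(k,k-1)$, as in the first embedding of Lemma~\ref{layered2}, and to embed the remaining word of length $k-2$ greedily into $R_1,\dots,R_{k-1}$. For this I will use the score function. By the lift identity, one of the shortened word and its lift has score $\le 0$. The shift identities (Lemma~\ref{shifted_score}) then let me slide the embedding by one run so that it starts in $R_1$ or $R_2$ and ends by $R_{k-2}$. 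That leaves room either in $R_{k-1}$ or in $L$ for the maximal letters, which now come last cyclically. Relabelling $k-2\mapsto k-1$ (or $k$) is harmless, because the maximum is unique, so order-isomorphism is preserved.

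The main obstacle is keeping the two copies of the repeated letter correctly ordered by tie-breaking after the rotation. If the rotation separates $\sigma(a)$ from $\sigma(b)$ so that the copy that should be larger now comes later, the tie-breaking would invert them. To handle this I will split into cases according to whether the rotated-off block contains $a$ or $b$. Since $b\le k$ and $a\le b-2$, the only troublesome situation is $b\in\{k-1,k\}$. There I will instead take the horizontal cut at a different distant inverse-descent, or use the fact that $\sigma(b)\in\{1,2\}$, and handle it directly with an explicit assignment in the style of Lemmas~\ref{layered2} and~\ref{layerd3}.
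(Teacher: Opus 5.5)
There is a genuine gap, and it starts at the reduction step.

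\textbf{The reduction targets the wrong word.} You cut all of $\sigma$, so $p\in[k-1]^k$ and $p^+$ contains the letter $k$, coming from $\sigma$'s maximum. The runs $R_1,\dots,R_{k-1}$ of $\mathrm{zzc}(k)$ come from $zz(k-1,k-1)$: the odd runs stop at $k-2$, and level $k$ occurs only at $(k,L)$. So an embedding of $p^+$ in the first $k-1$ runs of the infinite zigzag word need not lie in $\mathrm{zzc}(k)$.

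The two facts you import (the lift ends at level $3$; the maximum is $k-2$ with multiplicity one, Lemma~\ref{h_cut_max}) are about a different word. They concern the cut of the prefix $\sigma(1:k-1)$ after rotating so that $\sigma(k)=k$. That normalization is the paper's first step: it cuts only $\sigma(1:k-1)$ to get $\sigma'\in[k-2]^{k-1}$, embeds $\sigma'$ or $(\sigma')^+$ in $zz(k-1,k-1)$, and appends $k\mapsto(k,L)$.

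If you normalize the same way, your $p$ is $(\sigma',k-1)$ and $S(p)=S(\sigma')+C_{2,k-1}=S(\sigma')+1$. The bad case is then the reverse of the one you name:
\begin{itemize}
\item Whenever $S(p)\le 0$, $p$ fits in $\mathrm{zzc}(k)$. Its last letter $k-1$ lies in an even run $\le k-1$, and $C_{2,k-1}=1$ pushes $p(k-1)=2$ into $R_{k-3}$ or earlier.
\item The real obstruction is $S(p)=1$, i.e.\ $S(\sigma')=0$. There $p^+$ needs $(3,R_k)$ and $\sigma'$ needs the deleted $(2,R_{k-1})$.
\end{itemize}

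\textbf{The boundary case is only a plan, and it does not close.} This case is the whole content of the lemma. The paper sets $\pi=\sigma'(1:k-2)$ and splits on $S(\pi)$:
\begin{itemize}
\item If $S(\pi)=-1$, it keeps the rotation and uses $2\mapsto(2,R_{k-3})$, $k\mapsto(k,L)$.
\item Otherwise it rotates to $(2,k,\sigma(1),\dots,\sigma(k-2))$, sets $2\mapsto(1,R_1)$ and $k\mapsto(k-1,R_2)$, and shifts $\pi$ or $\pi^+$ one run to the right.
\item When $\pi(1)=k-2$ blocks this, $\sigma(1)=k-1$ is cyclically adjacent to $\sigma(k)=k$, and that pair goes to $L$. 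This is where Lemma~\ref{h_cut_max} is used.
\end{itemize}

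Your sketch has these problems:
\begin{itemize}
\item It never fixes a rotation. Moving $\sigma(k-1)\sigma(k)$ to the front puts $k$ second, not last.
\item Sending ``one or two top values'' to $L=(k,k-1)$ requires them to be cyclically adjacent in that order. That is only the special configuration above.
\item You never check what happens when you pass to the lift of the shortened word. The lift raises $\sigma$'s $1$ to level $2$, so a letter placed at level $1$ to play $\sigma$'s $2$ ends up smaller than it.
\end{itemize}

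The last point is not a formality. Take $k=5$ and $\sigma=13425$. The assignment $2\mapsto(1,R_1)$, $5\mapsto(4,R_2)$, followed by the shifted $\pi^+=234$, produces $15234$, which is not a rotation of $\sigma$. As written, the paper's Case~II(b) has the same defect. A correct occurrence of the rotation $25134$ keeps $2\mapsto(1,R_1)$, $5\mapsto(4,R_2)$ but uses $1\mapsto(1,R_3)$, $3\mapsto(3,R_3)$, $4\mapsto(4,R_4)$.

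Finally, the case $b=k-1$ cannot be avoided by choosing another distant inverse-descent. The permutation $13425$ has only the pair $(3,2)$, with $b=k-1$. So the explicit treatment you defer is genuinely needed.
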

\begin{proof}
Let $\sigma \in S_k$ satisfy $\sigma(k-1)=2$ and assume that $\sigma$
has a distant inverse descent. 
Let $\sigma'$ be the word after the horizontal cut operation on $\sigma(1:(k-1))$. Then there are two cases, either $(\sigma')^+$ is an exact subsequence of $zz(k-1,k-1)$ or $\sigma'$ is an exact subsequence of $zz(k-1,k-1)$. \\
Case I :
If $(\sigma')^+$ is an exact subsequence of $zz(k-1,k-1)$ then $((\sigma')^+,k)$ is embedded in $zzc(k)$ (by assignment $k \mapsto (k,L)$). \\

Case II : When $\sigma'$ is an exact subsequence of $zz(k-1,k-1)$. The three scenarios are depicted below:\\
Case II(a)   $S(\pi) = -1$:
Now suppose that $\sigma'$ is an exact subsequence of $zz(k-1,k-1)$, and let
$\pi=\sigma'(1:k-2)$.
If $S(\pi)=-1$, then the exact embedding of $\pi$ occupies only the first
$k-3$ runs. Since the symbol $2$ does not occur in $\pi$, the occurrence
$(2,R_{k-3})$ remains unused. Hence we can extend the embedding by the
assignments
$
2\mapsto(2,R_{k-3}), 
k\mapsto(k,L).
$
Therefore, $\sigma$ appears as an order-isomorphic subsequence of $\gamma_k$.
Now when $S(\pi) \in \{ 0, 1\}$ we take the help of Lemma \ref{shifted_score} to embed $\pi$ or $\pi^+$ in $zzc(k)$. \\

Case II(b) $S(\pi) = 0$:
From Lemma \ref{shifted_score}, if $\pi(1)$ is even, then shifting the assignment will give us $S'(\pi) = -1$. Then we can have the assignment $2 \mapsto (1,R_1)$ and $k \mapsto (k-1,R_2)$. The corner case here is when $(k-1,R_2)$ is not free for an assignment, but notice that this case will never happen since $\pi$ is obtained by a horizontal cut procedure. \\

From Lemma \ref{shifted_score}, if $\pi(1)$ is odd, then shifting the assignment will give us $S'(\pi^+) = 0$. Then we can have the assignment $2 \mapsto (1,R_1)$ and $k \mapsto (k-1,R_2)$. 

The corner case here is when $\pi(1) = k-2$. We make the assignment $k-2 \mapsto (k-1,L)$. Note that largest element of $\pi$ is $k-2$ with multiplicity one and the second largest is $k-3$ from Lemma \ref{h_cut_max}. In this case $S_e(\pi(2:k-2))$ is zero (Lemma \ref{even_shifted_score}). Then we can left shift $\pi(2:k-2)$ or $\pi^+(2:k-2)$, to have the score 0 or -1, thus keeping the $R_{k-2}$ free. Then we can have the assignment $2 \mapsto (3,R_{k-3}), k \mapsto (k,L)$. \\
Case II(c) $S(\pi) = 1$, 
 we may shift either $\pi$ or
$\pi^{+}$ by one run in the zigzag word. This shift creates a position
in the first run thus we can have $2 \mapsto (1,R_1),  k \mapsto (k-1,R_2) $. \\
The boundary conditions are precisely those permutations for which the $k-1$ of the second run is not free after the shift. That is $\pi(1) = k-2$ and we are shifting $\pi^+$. 
If for a boundary case, $S(\pi)=0$, then we assign $\pi(1) \mapsto (k-1,L)$. We left shift the $\pi(2:k-2)$ or $\pi^+(2:k-2)$ such that it is accommodated in the first $k-3$ runs. Then $2 \mapsto (3,R_{k-2})$. 

The only remaining potential obstruction would occur if $S(\pi)=1$ and
$\pi(k-2)=1$. However, this configuration would force the final run of
the zigzag word to contain the entries $1$ and $2$ in increasing order,
which is impossible by the construction of the runs. Hence this case
cannot occur.

Therefore a cyclic rotation of $\sigma$ appears as an order-isomorphic
subsequence of $\gamma_k$, completing the proof.
\end{proof}

\subsubsection*{ Proof of Theorem \ref{zzc_odd}:}
With these lemmas, we can now prove the Theorem \ref{zzc_odd}.
\begin{proof}
Let $\sigma \in S_k$ be arbitrary. Since circular containment is invariant 
under cyclic rotation, we may assume without loss of generality that 
$\sigma(k)=k$.

We use the structural classification of Engen and Vatter~\cite{engen2020containing}, 
which states that every permutation is either layered or has a distant 
inverse descent.

\medskip
\noindent
\textbf{Case 1: $\sigma$ has a distant inverse descent.}

In this case the embedding of the prefix $\sigma(1:k-1)$ into the zigzag 
word follows from the Engen--Vatter construction (when $\sigma(k-1) \neq 2$). 
In particular, the embedding uses only the first $k-1$ runs of the zigzag 
word and therefore avoids the modified terminal portion of $zzc(k)$. 
Consequently $\sigma(1:k-1)$ appears as a subsequence of $zzc(k)$, and 
the final element $k$ can be assigned to the distinguished terminal entry 
of $zzc(k)$. Hence $\sigma$ appears as a circular subsequence of $\gamma_k$. The boundary case, when $\sigma(k-1) = 2$ is handled by Lemma \ref{2boundaryDID}.

\medskip
\noindent
\textbf{Case 2: $\sigma$ is layered.}

From Engen-Vatter, when $\sigma(k-1) \notin \{ 1,2,3 \}$ we can embed $\sigma(1:k-1)$ in the $k-1$ runs of 
$zzc(k)$. 
The boundary cases $\sigma(k-1) \in \{1,2 \}$ and $\sigma(k-1)=3$ are handled 
in Lemmas~\ref{layered2} and~\ref{layerd3}, respectively, 
which show that suitable rotations of these permutations occur in 
$\gamma_k$.

\medskip
In all cases we obtain a cyclic representative of $\sigma$ as a subsequence 
of $\gamma_k$. Therefore $\gamma_k$ contains every permutation of length 
$k$ up to cyclic equivalence, and hence $\gamma_k$ is a circular 
$k$-superpattern.
\end{proof}

\section{Conclusion and future scope}
We have formalized circular $k$-superpatterns and established the general upper bound $L_{\mathrm{circ}}(k)\le L(k-1)+1$, together with parity-based zigzag constructions for both odd and even $k$.  

Open questions include: 
\begin{itemize}
\item Determining tight bounds or asymptotic growth of $L_{\mathrm{circ}}(k)$. 

\item Designing algorithms to generate minimal circular superpatterns. 

\item Exploring analogues of circular superpatterns in cyclic networks and biological sequences. 

\item Pattern avoidance problems in circular settings.

\end{itemize}

\section{Appendix}\label{appendix}
Consider the runs $R_2,R_3,R_4 \cdots $ on a zigzag word, let us call it an even zigzag word, then score of placing $\pi$ on these runs is given by
\begin{equation}\label{ssigma}
S_e(\sigma) = \sum_{i=1}^{k-1} C_{\sigma_i \sigma_{i+1}}
         + C^e_{\cdot,\,\sigma_1},    
\end{equation}
where the initial term is
\[
C^e_{\cdot,\,x} = \frac{1 - p_x}{2}.
\]

We can also verify that the Lift identity holds in this case, i.e. $S_e(\sigma) + S_e(\sigma^+) = 1$.
\begin{lemma} We have the following conditional relations for the shifted score:
\begin{enumerate}
    \item If the score $S_e(\pi) = 0$ and $\pi(1)$ is even, then $S_e'(\pi^+) = 0$. 
 
    \item  If the score $S_e(\pi) = 0$ and $\pi(1)$ is odd, then $S_e'(\pi^+)  = -1$.  
    
    \item If the score $S_e(\pi) = 1$ and $\pi(1)$ is odd, then $S_e'(\pi) = 0$.  

    \item  If the score $S_e(\pi) = 1$ and $\pi(1)$ is even, then  $S'(\pi^+)  = -1$.  
\end{enumerate}
The conclusions remain valid if the embedding is shifted by one run to the right or left due to the symmetry.
\end{lemma}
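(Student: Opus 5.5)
We prove the appendix lemma by direct computation, exactly as in the proof of Lemma~\ref{shifted_score}, now on the even zigzag word. In the word $R_2,R_3,\dots$ the roles of odd and even symbols are swapped, which is why the initial cost is $C^e_{\cdot,x}=\frac{1-p_x}{2}$. Shifting the embedding by one run replaces $C^e_{\cdot,\pi(1)}$ by the opposite-parity initial term $\frac{1+p_{\pi(1)}}{2}$ and leaves the sum of transition costs $\sum C_{\pi_i\pi_{i+1}}$ unchanged. So the first step is the analogue of \eqref{shiftsc}:
\[
S_e'(\pi)=S_e(\pi)-\frac{1-p_{\pi(1)}}{2}+\frac{1+p_{\pi(1)}}{2}=S_e(\pi)+p_{\pi(1)} .
\]

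The second step handles the items involving $\pi^+$. We combine the even lift identity $S_e(\pi)+S_e(\pi^+)=1$ (verified exactly as in the linear case, from $C_{x+1,y+1}=-C_{xy}$ and $C^e_{\cdot,x+1}=\frac{1+p_x}{2}$) with the relation $p_{\pi(1)+1}=-p_{\pi(1)}$. This gives
\[
S_e'(\pi^+)=1-S_e(\pi)-p_{\pi(1)} .
\]

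The four items then follow by substitution:
\begin{itemize}
\item $S_e(\pi)=0$, $\pi(1)$ even ($p=1$): $S_e'(\pi^+)=0$.
\item $S_e(\pi)=0$, $\pi(1)$ odd ($p=-1$): $S_e'(\pi^+)=2$.
\item $S_e(\pi)=1$, $\pi(1)$ odd: $S_e'(\pi)=0$.
\item $S_e(\pi)=1$, $\pi(1)$ even: $S_e'(\pi^+)=-1$.
\end{itemize}

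The substitution gives $2$, not $-1$, in the second item, so that item as printed fails under this convention. The sign choice matters here. If the shift is read as moving to the left, from $R_2,R_3,\dots$ back onto $R_1,R_2,\dots$, the shifted initial term is $C_{\cdot,x}=\frac{1+p_x}{2}$, which is what I used above. If it is read as moving right onto $R_3$, parity matching returns to the even-word convention, and the correct first-step formula would be $S_e'(\pi)=S_e(\pi)-p_{\pi(1)}$. I would fix the direction so that it matches Lemma~\ref{shifted_score}, where the shift replaced $\frac{1+p}{2}$ by $\frac{1-p}{2}$. Here that means replacing $\frac{1-p}{2}$ by $\frac{1+p}{2}$, giving $S_e'=S_e+p$.

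The main obstacle is this bookkeeping of sign and direction, plus justifying the closing remark about symmetry. That remark holds because shifting left or right by one run flips the parity class of the word while leaving the transition costs intact. Even after fixing the direction, the second item stays wrong ($S_e'(\pi^+)=2$), and I would amend it as follows. Case 2 instead yields $S_e'(\pi)=-1$. In either reading, one of $\pi,\pi^+$ attains shifted score $\le 0$, and $\le -1$ in the designated items, which is all that the application in Lemma~\ref{2boundaryDID} requires.

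I would present the proof as the displayed identity followed by four one-line substitutions, with the correction to the second item noted.
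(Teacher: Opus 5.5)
Your argument is correct and is essentially the paper's own: its proof likewise substitutes into $S_e'=S_e+p_{\pi(1)}$ together with the even lift identity. Its displayed intermediate lines contain index and sign slips, but every final value agrees with that formula. In the case $S_e(\pi)=0$, $\pi(1)$ odd, the paper's proof actually derives $S_e'(\pi)=p_{\pi(1)}=-1$, which is exactly your amended item 2. So the printed $S_e'(\pi^+)=-1$ is a misprint, as is the stray $S'$ in item 4, which should read $S_e'$.

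One correction to your side remark: a right shift onto $R_3$ does not give $S_e'=S_e-p_{\pi(1)}$. Since $R_3$ is an odd run, shifting left or right both land on an odd-type word with initial term $\frac{1+p_x}{2}$ and give the same formula. That is precisely the symmetry behind the final clause, as you yourself say a few lines later.
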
\label{even_shifted_score}

\begin{proof}
  The proof is straightforward from equation (\ref{shiftsc})
    \begin{enumerate}
    \item If the score $S_e(\pi) = 0$ and $\pi(1)$ is odd, then 
    \begin{align*}
        S_e'(\pi) &= S_e(\pi) - \left(\frac{1 - p_{\pi(1)}}{2}\right) + \frac{1 + p_{\pi(1)+1}}{2} \\
        &=  p_{\pi(1)} = -1. 
    \end{align*}

    \item  If the score $S_e(\pi) = 0$ and $\pi(1)$ is even, then 
    \begin{align*}
        S_e'(\pi^+) &= S_e(\pi^+) - \left(\frac{1 - p_{\pi(1)+1}}{2}\right) + \frac{1 + p_{\pi(1)+1}}{2} \\
        &= 1 + p_{\pi(1)+1} = 0. 
    \end{align*}
    
    \item If the score $S_e(\pi) = 1$ and $\pi(1)$ is even, then 
       \begin{align*}
        S_e'(\pi^+) &= S_e(\pi^+) - \left(\frac{1- p_{\pi(1)+1}}{2}\right) + \frac{1 + p_{\pi(1)+1}}{2} \\
        &= 0+p_{\pi(1)+1} = -1.  
    \end{align*}

    \item  If the score $S_e(\pi) = 1$ and $\pi(1)$ is odd, then 
       \begin{align*}
        S_e'(\pi) &= S_e(\pi) - \left(\frac{1+ p_{\pi(1)}}{2}\right) + \frac{1 - p_{\pi(1)}}{2} \\
        &= 1+ p_{\pi(1)} = 0.  
    \end{align*}
    \end{enumerate}
\end{proof}

\end{document}